\newcommand{\R}{\mathbb{R}}
\newcommand{\N}{\mathbb{N}}
\newcommand{\T}{^\top}
\newcommand{\inv}{^{-1}}
\newcommand{\until}[1]{\{1,\ldots,#1\}} 
\newcommand{\argmin}{\mathop{\rm argmin}}
\newcommand{\subj}{\textnormal{subj.~to}}
\DeclareMathOperator{\coltext}{col}
\newtheorem{theorem}{Theorem}
\newtheorem{proposition}{Proposition}
\newtheorem{definition}{Definition}
\newtheorem{assumption}{Assumption}
\newtheorem{remark}{Remark}
\newcommand\oprocendsymbol{\hbox{$\blacksquare$}}
\newcommand\oprocend{\relax\ifmmode\else\unskip\hfill\fi\oprocendsymbol}
\newcommand{\VI}[1]{\text{VI}(#1)}
\newcommand{\cC}{\mathcal C}
\newcommand{\cE}{\mathcal E}
\newcommand{\cG}{\mathcal G}
\newcommand{\cH}{\mathcal H}
\newcommand{\cI}{\mathcal I}
\newcommand{\cL}{\mathcal L}
\newcommand{\cM}{\mathcal M}
\newcommand{\cN}{\mathcal N}
\newcommand{\cS}{\mathcal S}
\newcommand{\cT}{\mathcal T}
\newcommand{\cW}{\mathcal W}
\newcommand{\cX}{\mathcal X}
\newcommand{\norm}[1]{\left \|#1 \right \|}
\newcommand{\col}[1]{\coltext\left[#1\right]}
\newcommand{\rz}{\mathrm{z}}
\newcommand{\rx}{\mathrm{x}}
\newcommand{\rw}{\mathrm{w}}
\newcommand{\rs}{\mathrm{s}}
\newcommand{\z}{\rz}
\newcommand{\x}{\rx}
\newcommand{\w}{\rw}
\newcommand{\s}{\rs}
\newcommand{\ud}{^}
\newcommand{\iter}{t}
\newcommand{\iterp}{{\iter+1}}
\newcommand{\initer}{\tau}
\newcommand{\initerp}{{\initer+1}}
\newcommand{\n}{n}
\newcommand{\m}{m}
\newcommand{\xt}{\x\ud\iter}
\newcommand{\zt}{\z\ud\iter}
\newcommand{\ztp}{\z\ud{\iterp}}
\newcommand{\bz}{\bar{\z}}
\newcommand{\bzt}{\bz\ud\iter}
\newcommand{\bztp}{\bz\ud\iterp} 
\newcommand{\pz}{\z_\perp} 
\newcommand{\pzt}{\pz\ud\iter} 
\newcommand{\pztp}{\pz\ud\iterp}
\newcommand{\lipp}{\beta}
\newcommand{\xit}{\x\ud\iter_{i}}
\newcommand{\xjt}{\x\ud\iter_{j}}
\newcommand{\xitp}{\x\ud{\iterp}_{i}}
\newcommand{\F}{F}
\newcommand{\X}{X}
\newcommand{\xstar}{x^\star}
\newcommand{\f}{f}
\newcommand{\phii}{\phi_{i}}
\newcommand{\phij}{\phi_{j}}
\newcommand{\J}{J}
\newcommand{\Ji}{\J_{i}}
\newcommand{\xmi}{x_{-i}}
\newcommand{\xstarmi}{\xstar_{-i}}
\newcommand{\xstari}{\xstar_{i}}
\newcommand{\cCi}{\cC_{i}}
\newcommand{\cmi}{c_{-i}}
\newcommand{\sigmai}{\sigma_{-i}}
\newcommand{\map}[3]{#1: #2 \rightarrow #3}
\newcommand{\1}{\mathbf{1}}
\newcommand{\lit}{\lambda\ud\iter_i}
\newcommand{\ljt}{\lambda\ud\iter_j}
\newcommand{\litp}{\lambda\ud\iterp_i}
\newcommand{\wit}{\w\ud\iter_i}
\newcommand{\wjt}{\w\ud\iter_j}
\newcommand{\witp}{\w\ud\iterp_i}
\newcommand{\zit}{\zt_i}
\newcommand{\zjt}{\zt_j}
\newcommand{\zitp}{\ztp_i}
\newcommand{\zni}{\z_{\cN_i}}
\newcommand{\znit}{\zni\ud\iter}
\newcommand{\ztt}{\z\ud{\iter,\initer}}
\newcommand{\zttp}{\z\ud{\iter,\initerp}}
\newcommand{\zitt}{\ztt_i}
\newcommand{\zjtt}{\ztt_j}
\newcommand{\zittp}{\zttp_i}
\newcommand{\znitt}{\zni\ud{\iter,\initer}}
\newcommand{\statet}{\state\ud\iter}
\newcommand{\statetp}{\state\ud\iterp}
\newcommand{\pr}{\delta}
\newcommand{\step}{\gamma}
\newcommand{\degi}{\text{deg}_i}
\newcommand{\state}{\chi}
\newcommand{\statestar}{\state^\star}
\newcommand{\cSc}{\cS_{\state}}
\newcommand{\cSz}{\cS_{\z}}
\newcommand{\cSbzI}{\cS_{\bzI}}
\newcommand{\cSbzE}{\cS_{\bzE}}
\newcommand{\cSbz}{\cS_{\bz}}
\newcommand{\lstar}{\lambda^\star}
\newcommand{\agg}{\alpha}
\newcommand{\lagg}{\varphi}
\newcommand{\laggi}{\lagg_i}
\newcommand{\statei}{\state_i}
\newcommand{\statej}{\state_j}
\newcommand{\statek}{\state_k}
\newcommand{\stateit}{\statei\ud\iter}
\newcommand{\statejt}{\statej\ud\iter}
\newcommand{\statekt}{\statek\ud\iter}
\newcommand{\stateitp}{\statei\ud\iterp}
\newcommand{\stateni}{\state_{\cN_i}}
\newcommand{\statenit}{\stateni\ud\iter}
\newcommand{\al}{g}
\newcommand{\ali}{\al_i}
\newcommand{\tr}{h}
\newcommand{\tri}{\tr_i}
\newcommand{\na}{a}
\newcommand{\pzeq}{\pz\ud{\text{eq}}}
\newcommand{\lippa}{\lipp_{\al}}
\newcommand{\nstate}{n_\state}
\newcommand{\nstatei}{n_{\state_i}}
\newcommand{\nstatej}{n_{\state_j}}
\newcommand{\tz}{\tilde{\z}_\perp}
\newcommand{\tzt}{\tz\ud\iter}
\newcommand{\tztp}{\tz\ud\iterp}
\newcommand{\Vz}{U}
\newcommand{\Vc}{W}
\newcommand{\slow}{s}
\newcommand{\fast}{f}
\newcommand{\ala}{\al_{\agg}}
\newcommand{\Xstar}{\X^\star}
\newcommand{\Xvstar}{\Xv^\star}
\newcommand{\out}{\eta}
\newcommand{\outi}{\out_i}
\newcommand{\Xv}{\X}
\newcommand{\nz}{p}
\newcommand{\nzi}{p_i}
\newcommand{\nzj}{p_j}
\newcommand{\zetat}{\zeta\ud\iter} 
\newcommand{\zetatp}{\zeta\ud\iterp}
\newcommand{\zetajt}{\zeta_j\ud\iter}
\newcommand{\zetait}{\zeta_i\ud\iter}
\newcommand{\zetaitp}{\zeta_i\ud\iterp}
\newcommand{\tagg}{\hat{\agg}}
\newcommand{\taggi}{\tagg_i}
\newcommand{\taggj}{\tagg_j}
\newcommand{\ptagg}{\tagg_\perp}
\newcommand{\ptaggI}{\tagg_{\ssI,{\scriptscriptstyle \perp}}}
\newcommand{\ptaggE}{\tagg_{\ssE,{\scriptscriptstyle \perp}}}
\newcommand{\cnsor}{consensus-oriented }
\newcommand{\ssI}{{\scriptscriptstyle I}}
\newcommand{\ssE}{{\scriptscriptstyle E}}
\newcommand{\aggI}{\agg_\ssI}
\newcommand{\laggI}{\lagg_\ssI}
\newcommand{\laggIi}{\lagg_{\ssI,\scriptscriptstyle{i}}}
\newcommand{\laggIj}{\lagg_{\ssI,\scriptscriptstyle{j}}}
\newcommand{\laggIni}{\lagg_{\ssI,\scriptscriptstyle{\cN_i}}}
\newcommand{\aggE}{\agg_\ssE}
\newcommand{\laggE}{\lagg_\ssE}
\newcommand{\laggEi}{\lagg_{\ssE,\scriptscriptstyle{i}}}
\newcommand{\laggEj}{\lagg_{\ssE,\scriptscriptstyle{j}}}
\newcommand{\naI}{\na_\ssI}
\newcommand{\naE}{\na_\ssE}
\newcommand{\zI}{\z_\ssI}
\newcommand{\zIt}{\zI\ud\iter} 
\newcommand{\zItp}{\zI\ud\iterp} 
\newcommand{\zE}{\z_\ssE}
\newcommand{\zEt}{\zE\ud\iter} 
\newcommand{\zEtp}{\zE\ud\iterp} 
\newcommand{\zIi}{\z_{\ssI,\scriptscriptstyle{i}}}
\newcommand{\zIit}{\zIi\ud\iter}
\newcommand{\zEi}{\z_{\ssE,\scriptscriptstyle{i}}}
\newcommand{\zEit}{\zEi\ud\iter}
\newcommand{\zIitp}{\zIi\ud\iterp}
\newcommand{\zEitp}{\zEi\ud\iterp}
\newcommand{\bzI}{\bz_\ssI}
\newcommand{\bzItp}{\bzI\ud\iterp}
\newcommand{\bzE}{\bz_\ssE}
\newcommand{\bzEtp}{\bzE\ud\iterp}
\newcommand{\pzI}{\z_{\ssI,{\scriptscriptstyle \perp}}}
\newcommand{\pzIt}{\pzI\ud\iter}
\newcommand{\pzItp}{\pzI\ud\iterp}
\newcommand{\pzE}{\z_{\ssE,{\scriptscriptstyle \perp}}}
\newcommand{\pzEt}{\pzE\ud\iter}
\newcommand{\pzEtp}{\pzE\ud\iterp}
\newcommand{\laggEni}{\lagg_{\ssE,{\scriptscriptstyle \cN_i}}}
\newcommand{\zIni}{\z_{\ssI,{\scriptscriptstyle \cN_i}}}
\newcommand{\zInit}{\zIni\ud\iter}
\newcommand{\taggIi}{\tagg_{\ssI,\scriptscriptstyle{i}}}
\newcommand{\taggEi}{\tagg_{\ssE,\scriptscriptstyle{i}}}
\newcommand{\taggIj}{\tagg_{\ssI,\scriptscriptstyle{j}}}
\newcommand{\taggIni}{\tagg_{\ssI,\scriptscriptstyle{\cN_i}}}
\newcommand{\taggI}{\tagg_\ssI}
\newcommand{\taggE}{\tagg_\ssE}
\newcommand{\trIi}{\tr_{\ssI,\scriptscriptstyle{i}}}
\newcommand{\trEi}{\tr_{\ssE,\scriptscriptstyle{i}}}
\newcommand{\trI}{\tr_{\ssI}}
\newcommand{\trE}{\tr_{\ssE}}
\newcommand{\nzIi}{n_{z_{\ssI,\scriptscriptstyle{i}}}}
\newcommand{\nzEi}{n_{z_{\ssE,\scriptscriptstyle{i}}}}
\newcommand{\nzIj}{n_{z_{\ssI,\scriptscriptstyle{j}}}}
\newcommand{\nzEj}{n_{z_{\ssE,\scriptscriptstyle{j}}}}
\newcommand{\nzI}{n_{\z_\ssI}}
\newcommand{\nzE}{n_{\z_\ssE}}
\newcommand{\pzIeq}{\z_{\ssI,{\scriptscriptstyle \perp}}^{\text{eq}}}
\newcommand{\pzEeq}{\z_{\ssE,{\scriptscriptstyle \perp}}^{\text{eq}}}
\newcommand{\uI}{u_\ssI}
\newcommand{\uE}{u_\ssE}
\newcommand{\tzI}{\tilde{\z}_{\ssI,{\scriptscriptstyle \perp}}}
\newcommand{\tzE}{\tilde{\z}_{\ssE,{\scriptscriptstyle \perp}}}
\newcommand{\tzIt}{\tzI\ud\iter}
\newcommand{\tzItp}{\tzI\ud\iterp}
\newcommand{\tzEt}{\tzE\ud\iter}
\newcommand{\tzEtp}{\tzE\ud\iterp}
\newcommand{\VzI}{\Vz_\ssI}
\newcommand{\VzE}{\Vz_\ssE}
\newcommand{\ttr}{\tilde{\tr}}
\newcommand{\pttrI}{\ttr_{\ssI,{\scriptscriptstyle \perp}}}
\newcommand{\pttrE}{\ttr_{\ssE,{\scriptscriptstyle \perp}}}
\newcommand{\pt}{p\ud\iter} 
\newcommand{\ptp}{p\ud\iterp} 
\newcommand{\pit}{\pt_i}
\newcommand{\pjt}{\pt_j}
\newcommand{\pitp}{\ptp_i}
\newcommand{\qt}{q\ud\iter} 
\newcommand{\qtp}{q\ud\iterp} 
\newcommand{\qit}{\qt_i}
\newcommand{\qjt}{\qt_j}
\newcommand{\qitp}{\qtp_i}
\newcommand{\zij}{\z_{ij}}
\newcommand{\zji}{\z_{ji}}
\newcommand{\zijt}{\zij\ud\iter} 
\newcommand{\zjit}{\zji\ud\iter} 
\newcommand{\zijtp}{\zij\ud\iterp} 
\newcommand{\lippeq}{\lipp_{\text{eq}}}
\newcommand{\set}{\cI}
\newcommand{\cHp}{\cH_\rho}
\newcommand{\cXs}{\cX^\star}
\newcommand{\fs}{\f_{\sigma}}
\newcommand{\nb}{\bar{\nz}} 
\newcommand{\np}{\nz_\perp} 
\newcommand{\nbI}{\nb_\ssI} 
\newcommand{\npI}{\nz_{\ssI,{\scriptscriptstyle \perp}}}
\newcommand{\nbE}{\nb_\ssE} 
\newcommand{\npE}{\nz_{\ssE,{\scriptscriptstyle \perp}}}
\newcommand{\TT}{\cT}
\newcommand{\Tb}{\bar{\TT}} 
\newcommand{\Tp}{\TT_\perp}
\newcommand{\TTi}{\TT\inv}
\newcommand{\TTI}{\TT_\ssI}
\newcommand{\TbI}{\Tb_\ssI} 
\newcommand{\TpI}{\TT_{\ssI,{\scriptscriptstyle \perp}}}
\newcommand{\TTE}{\TT_\ssE}
\newcommand{\TbE}{\Tb_\ssE} 
\newcommand{\TpE}{\TT_{\ssE,{\scriptscriptstyle \perp}}}
\newcommand{\ptr}{\tr_\perp}
\newcommand{\ptrI}{\tr_{\ssI,{\scriptscriptstyle \perp}}}
\newcommand{\ptrE}{\tr_{\ssE,{\scriptscriptstyle \perp}}}
\newcommand{\cSze}{\cS_{\zeta}}
\newcommand{\Vzeta}{\Vz}
\newcommand{\tzeta}{\tilde{\zeta}}
\newcommand{\tzetat}{\tzeta\ud\iter} 
\newcommand{\tzetatp}{\tzeta\ud\iterp}
\newcommand{\zetaeq}{\zeta\ud{\text{eq}}}
\newcommand{\uIi}{u_{\ssI,{\scriptscriptstyle i}}}
\newcommand{\uEi}{u_{\ssE,{\scriptscriptstyle i}}}
\newcommand{\const}{\nu}
\newcommand{\tfc}{\tilde{\f}}
\newcommand{\Gg}{G}
\newcommand{\Gi}{\Gg_i}
\newcommand{\cf}{c_2}
\newcommand{\cff}{c_3}
\newcommand{\cfff}{c_4}
\newcommand{\dd}{D}
\def\er/{Erd\H{o}s-R\'enyi}
\def\acro/{LCDAC}
\def\centralized/{centralized}
\def\Centralized/{Centralized}
\title{A Unifying System Theory Framework for\\ Distributed Optimization and Games} 
\author{Guido Carnevale,
Nicola Mimmo,
Giuseppe Notarstefano, 
\thanks{This work was supported in part by Fondi PNRR - Bando PE - Progetto PE11 - 3A-ITALY, ``Made in Italy Circolare e Sostenibile'' - Codice PE0000004, CUP: J33C22002950001 and the European Union - PRIN 2022 ECODREAM Energy COmmunity management: DistRibutEd AlgorithMs and toolboxes for efficient and sustainable operations code [202228CTKY 002] - CUP [J53D23000560006].
The authors are with the Department of Electrical, Electronic and Information Engineering, Alma Mater Studiorum - Universit\`{a} di Bologna, Bologna, 40136, Italy, e-mail: \{guido.carnevale, nicola.mimmo2, giuseppe.notarstefano\}@unibo.it.}}
\begin{document}
\maketitle

\begin{abstract}
	This paper introduces a systematic methodological framework to
	design and analyze distributed algorithms for optimization and games
	over networks. Starting from a centralized method, we identify an
	aggregation function involving all the decision variables (e.g., a
	global cost gradient or constraint) and introduce a distributed consensus-oriented scheme to asymptotically approximate the
	unavailable information at each agent. Then, we delineate the proper
	methodology for intertwining the identified building blocks, i.e.,
	the optimization-oriented method and the consensus-oriented one. The
	key intuition is to interpret the obtained interconnection as a
	singularly perturbed system. We rely on this interpretation to
	provide sufficient conditions for the building blocks to be
	successfully connected into a distributed scheme exhibiting the
	convergence guarantees of the centralized algorithm. Finally, we
	show the potential of our approach by developing a new distributed
	scheme for constraint-coupled problems with a linear convergence rate.
  \end{abstract}

\section{Introduction}

Increasing attention has been devoted in the last years to turning centralized (or parallel) architectures into distributed ones,
since distributed algorithms offer many benefits: e.g., preserve privacy and avoid a single point of failure, see Fig.~\ref{fig:graph}.
\begin{figure}[H]
	\centering
	\includegraphics[scale = .75]{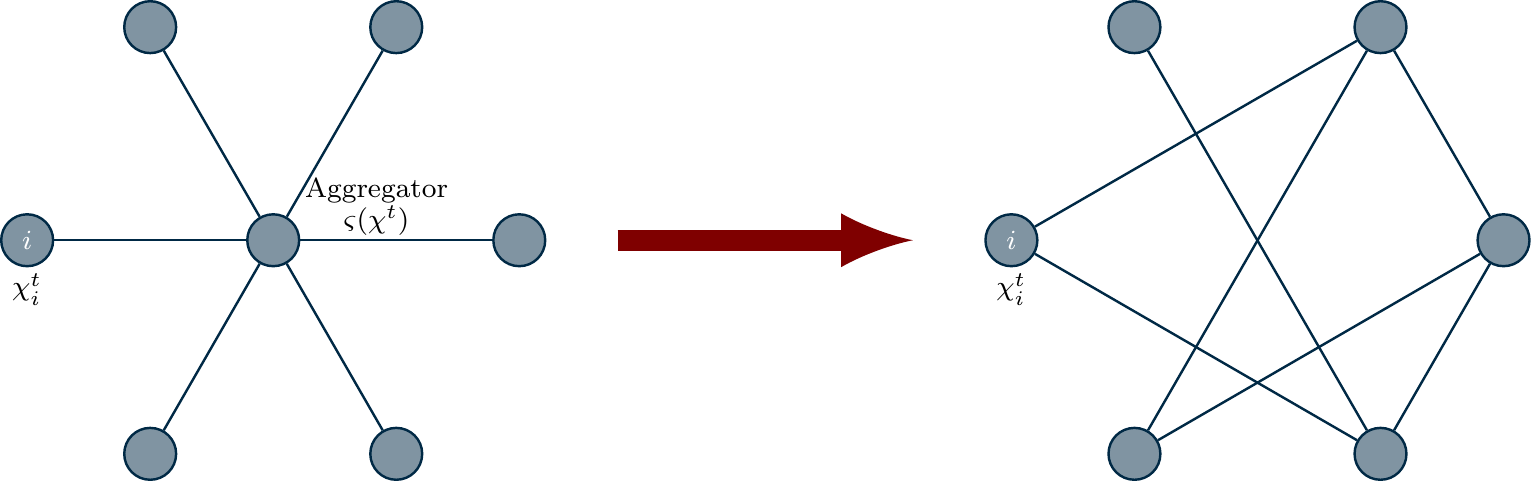}
	\caption{\Centralized/ (left) and distributed (right) architectures.}\label{fig:graph}
\end{figure}
In \emph{distributed optimization}, agents cooperate to minimize a
common performance index, while, in the context of \emph{games} over
networks, they compete with each other to minimize individual costs.
See the recent surveys on distributed
optimization~\cite{testa2023tutorial,notarstefano2019distributed,giselsson2018large,molzahn2017survey,nedic2018distributed,yang2019survey, negenborn2014distributed,necoara2011parallel} and aggregative games~\cite{jensen2010aggregative,parise2021analysis,belgioioso2022distributed}
for a comprehensive overview of the existing frameworks, algorithms, and application scenarios.
The use of tools from system theory in these settings is a recent successful trend~\cite{dorfler2024towards}.
The advantages of this approach have been already shown in the context of both distributed optimization~\cite{wang2010control,wang2011control,hatanaka2018passivity,chang2019saddle}
and games~\cite{gadjov2017continuous,gadjov2022exact}.
The work~\cite{van2022universal} (see also~\cite{van2020systematic}) proposes a control-theoretic approach to systematically decompose distributed optimization algorithms into a \centralized/
optimization method and a second-order consensus estimator.
Other systematic decompositions based on system theory are proposed
in~\cite{sundararajan2019canonical,sundararajan2020analysis}, where also time-varying graphs are considered.
A first attempt of a systematic design for distributed continuous-time optimization schemes is proposed in~\cite{touri2023unified} by relying on nonlinear observation theory.
Our key idea is to formalize an intuition of most
distributed algorithms that try to mimic a centralized algorithm by combining an approximated optimization part with a consensus scheme to agree on global information. 
Specific distributed settings have been addressed under the lens of \emph{singular perturbations} in both optimization~\cite{zanella2011newton,varagnolo2015newton,carnevale2022nonconvex,carnevale2022aggregative,maritan2023zo,carnevale2023admm} and game theory~\cite{gadjov2018passivity,carnevale2022tracking,krilavsevic2023stability,krilavsevic2023discrete,ochoa2023momentum}.
Singular perturbations (or timescale separation) are a powerful tool for analyzing systems characterized by the interconnection of a slow subsystem with a fast one~\cite{kokotovic1976singular,khalil2002nonlinear}.
In order to deal with agreement on global information in our schemes, we resort to the formalism of
$\agg$-consensus~\cite{bauso2006non,cortes2008distributed},
modeling problems in which $N$ agents endowed with local
quantities $s_1,\dots,s_N$ aim to reach consensus at $\agg(s_1,\dots,s_N)$.

The main contribution of this paper is a methodological
framework for both the design and analysis of distributed algorithms for
optimization and games over networks.
We develop a procedure that (i) systematically prescribes how to get a distributed version of an inspiring centralized method and (ii) provides analytical guarantees that the resulting distributed scheme achieves the same convergence properties of the centralized one.
Inspired by the mentioned successful examples using singular perturbations, we adopt them as a foundational tool to develop our systematic approach.
Specifically,
our procedure takes a centralized algorithm and identifies a so-called
\emph{aggregation} function, i.e., a global quantity depending on
all the network variables and, thus, requiring a central unit to be
computed.
Then, the global information is substituted by a local proxy that is
updated according to a so-called \cnsor dynamics.
Our framework formally establishes how to interlace the optimization-
and the \cnsor parts leading to a distributed algorithm with suitable
convergence guarantees.
We interpret the obtained interconnected dynamics as a Singularly
Perturbed (SP) system, usually decomposed into \emph{fast} and
\emph{slow} subsystems derived from the optimization-oriented part and the \cnsor one, respectively.
Based on this structure and by using tools from system theory, we
provide convergence guarantees for the resulting distributed algorithm based on proper conditions on the optimization-oriented part and the \cnsor one separately considered.
We consider general cases, possibly involving multiple solutions
(e.g., nonconvex scenarios) and guarantee convergence in a
LaSalle sense.
When the solution is unique, we establish conditions for its
exponential stability or, equivalently, for linear convergence.
Our main result stands on a general theorem for SP systems with LaSalle-type properties (Theorem~\ref{th:generic} in Appendix~\ref{sec:SP})
extending~\cite[Th.~1]{carnevale2022nonconvex} and, thus, representing
a contribution per se.
Another side contribution is Proposition~\ref{prop:nested_trackers} (in
Appendix~\ref{sec:nested}): it establishes sufficient conditions for
designing distributed schemes solving dynamic average
consensus problems with composite functions.
Finally, we showcase the potential of the proposed approach with a concrete example.
Specifically, we design a novel distributed algorithm for constraint-coupled
optimization scenarios and, under mild problem assumptions, we prove
its linear convergence properties.
To the best of our knowledge, such a novel algorithm is the first distributed scheme exhibiting a linear
convergence rate in solving optimization problems with coupling \emph{inequality} constraints.
This example highlights how novel distributed algorithms can be designed thanks to our framework by means of simple steps.
Thus, it shows that our approach paves the way for systematically obtaining a wide variety of distributed algorithms (and their analysis) across several existing and potentially new scenarios.
As for the existing attempts for systematic approaches (see the mentioned references~\cite{van2022universal,van2020systematic,sundararajan2019canonical,sundararajan2020analysis,touri2023unified}), we note that they all restrict to unconstrained convex consensus optimization and all but~\cite{touri2023unified} focus only on the analysis neglecting the design phase.
The work~\cite{touri2023unified} offers also design arguments but it is restricted to gradient-flow continuous-time schemes.
Let us then summarize the peculiarities of our methodology.
First, it offers not only the analysis but also a systematic algorithm design.
Second, thanks to the inherent generality of system theory, it provides a unified perspective across different viewpoints.
Indeed, it allows us to encompass a wide range of scenarios (e.g., constraint-coupled optimization or aggregative games), algorithms (e.g., augmented primal-dual or projected gradient methods), and assumptions (e.g., nonconvex costs or unbounded feasible sets).

The paper is organized as follows.
In Section~\ref{sec:problem}, we formulate the problem and present the more popular frameworks arising in distributed optimization and games.
Section~\ref{sec:systematic_design} presents our systematic approach and the main result of the paper.
Section~\ref{sec:overview_of_algorithms} provides an overview of optimization- and \cnsor algorithms matching the assumptions made in Section~\ref{sec:systematic_design}.
Finally, in Section~\ref{sec:application}, we apply our approach to synthesize a novel distributed scheme and numerically test its effectiveness.

\paragraph*{Notation}

$\col{x_1,\dots,x_N}$ denotes the column stacking of the vectors $x_1,\dots,x_N$.
In $\R^{m\times m}$, $I_m$ and $0_m$ are the identity and zero matrices. 
The symbol $1_N$ denotes the vector of $N$ ones while $\1_{N,d} := 1_N \otimes I_d$, where $\otimes$ denotes the Kronecker product. 
Dimensions are omitted when they are clear from the context.
Given $v \!\in\! \R^n$, we denote as $[v]_i$ its $i$-th component. %
Given $x \in \R^{n}$ and $\cS \subset \R^{\n}$, we define $\norm{x}_{\cS}:= \inf_{y \in \cS}\norm{x - y}$.
Given $f: \R^{n_1} \! \times \! \R^{n_2} \! \to \! \R$, we define $\nabla_1 f(x,y) \!:=\! \tfrac{\partial}{\partial s}f(s,y)|_{s = x}\T$ and $\nabla_2 f(x,y) \!:= \!\tfrac{\partial}{\partial s}f(x,s)|_{s = y}\T$. 
$\R_{+}^n$ is the positive orthant in $\R^n$.

\section{Problem Formulation and Preliminaries}
\label{sec:problem}

Our aim is to develop a comprehensive design and analysis approach for various scenarios in distributed optimization and games over networks.
To achieve this unification, we begin by introducing the concept of Variational Inequality (VI), as it offers a unified view on both distributed optimization and games~\cite{scutari2010convex,scutari2014real}.
In detail, given a subset $\Xv \subseteq \R^\n$ and a vector-valued function $F: \Xv \to \R^{\n}$,
a finite-dimensional VI problem $\VI{\Xv,\F}$ consists in finding $\xstar \in \Xv$ such that
\begin{align}\label{eq:VI_problem_generic}
	(x - \xstar)\T \F(\xstar) \ge 0, \quad \forall x \in \Xv.
\end{align}
A wide range of problems arising in optimization and games over networks can be modeled as particular instances of~\eqref{eq:VI_problem_generic} referred to as \emph{multi-agent} VI problem $\VI{\Xv,\F}$.
More in detail, in the multi-agent case, given $\set := \until{N}$, the decision variable $x$ can be written as $x := \col{x_1,\dots,x_N}$ with $x_i \in \R^{\n_i}$ and $\sum_{i=1}^N \n_i = \n$, while the feasible set reads as $\Xv \!= \!\Xv_1 \! \times\! \dots \!\times\! \Xv_N$ with $\Xv_i \subseteq \R^{\n_i}$ for all $i \in \set$, and $\F$ is
\begin{align*}
	\F(x) = \begin{bmatrix}
		\F_1(x)\T
		&
		\hdots
		&
		\F_N(x)\T
	\end{bmatrix}\T,
\end{align*}
where $F_i: \R^{\n} \to \R^{\n_i}$ for all $i \in \set$.
Hence, a multi-agent VI reads as
\begin{align}\label{eq:VI_problem_distributed}
	\left(
		\begin{bmatrix}
			x_1\\
			\vdots\\
			x_N
		\end{bmatrix} - 
		\begin{bmatrix}
			\xstar_1\\
			\vdots\\
			\xstar_N
		\end{bmatrix}	
	\right)\T 
	\begin{bmatrix}
		\F_1(\xstar)
		\\
		\vdots
		\\
		\F_N(\xstar)
	\end{bmatrix} \ge 0, \quad \forall x \in \Xv,
\end{align}
where $\xstar := \col{\xstar_1,\dots,\xstar_N} \in \R^\n$ with $\xstari \in \R^{\n_i}$ for all $i \in \set$.
We denote as $\Xvstar \subseteq \Xv$ the set containing all (possible) solutions to problem~\eqref{eq:VI_problem_distributed}.
In this paper, we focus on a comprehensive development of distributed
strategies to address problem~\eqref{eq:VI_problem_distributed}. %
Specifically, we focus on networks of $N$ agents communicating
according to a graph $\cG =(\set,\cE)$, where
$\set$ is the set of agents and $\cE \subseteq \set \times \set$ is the set of edges. 
The symbol $\cN_i$ denotes the in-neighbor set of agent $i$, namely
$\mathcal{N}_i := \{j\in\set \mid (j,i)\in\cE\}$.
Finally, the in-degree of agent $i$ is the cardinality of $\cN_i$ and
we use $\degi := |\cN_i|$ to denote it.

In the next, we provide an overview of popular frameworks addressed in the context of optimization and games over networks and we cast them in the form of the VI~\eqref{eq:VI_problem_distributed}.

\subsubsection*{Distributed Consensus Optimization} $N$ agents aim at
cooperatively minimizing the sum of functions all depending on a common
variable. 
Formally, this kind of problem reads as
\begin{align}
	\min_{x \in \R^{d}} \sum_{i=1}^N \f_i(x),\label{eq:consensus_optimization}
\end{align}
where, for all $i \in \set$, $\f_i: \R^{d} \to \R$ is the cost function associated to agent $i$.
We cast problem~\eqref{eq:consensus_optimization} in the form of~\eqref{eq:VI_problem_distributed} by setting 
\begin{align*}
	\F(x) \!=\! 
	\begin{bmatrix}
		\sum_{i=1}^N\!\! \nabla\f_i(x_1)\T
		&
		\!\hdots\!
		&
		\sum_{i=1}^N \!\! \nabla\f_i(x_N)\T
	\end{bmatrix}\T\!\!, 
	\hspace{.1cm} 
	\Xv \!=\! \R^{Nd}. 
\end{align*}

\subsubsection*{Distributed Constraint-Coupled Optimization} in this setup, the coupling among the agents' variables occurs in the constraints rather than in the objective functions.
Indeed, it reads as
\begin{align} 
	\begin{split}\label{eq:constr_coupled}
		\min_{(x_1,\ldots, x_N) \in \R^{\n}} \: & \: \sum_{i=1}^N \f_i(x_i)
		\\
		\subj \: & \: \sum_{i=1}^N A_ix_i\leq \sum_{i=1}^N b_i,
	\end{split}
\end{align}
where each $f_i: \R^{\n_i} \to \R$ is the local cost function of agent
$i$ depending on the associated decision variable $x_i \in \R^{\n_i}$,
while the elements $A_i \in \R^{\m \times \n_i}$ and $b_i \in \R^{\m}$ model the feasible set.
The whole decision variable is denoted as
$x := \col{x_1,\dots,x_N} \in \R^{\n}$ with $\n := \sum_{i=1}^N
\n_i$.
This problem structure does not necessarily impose consensus among the agents' estimates.
However, as we will show also in Section~\ref{sec:overview_of_algorithms}, because of the coupling in the constraints, every single agent cannot independently solve a local problem with local information.
We note that problem~\eqref{eq:constr_coupled} can be cast in the
form~\eqref{eq:VI_problem_distributed} with
$\Xv = \big\{x \in \R^{\n} \big| \sum_{i=1}^N (A_ix_i - b_i) \leq
0\big\}$ and
\begin{align*}
	\F(x) = \begin{bmatrix} \nabla\f_1(x_1)\T
		&
		\hdots
		&
		\nabla\f_N(x_N)\T
	\end{bmatrix}\T. %
\end{align*}
We recall that we want to satisfy the constraint formalized by the feasible set $\Xv$ in an asymptotic sense.

\subsubsection*{Distributed Aggregative Optimization} $N$ agents still cooperate to minimize the sum of $N$ functions, but now they also depend on a so-called \emph{aggregative} variable.
Formally, we have
\begin{align}\label{eq:aggregative_optimization_problem}
	\begin{split}
		\min_{(x_1,\dots,x_N) \in \R^{\n}} \: & \: \sum_{i=1}^{N}\f_i(x_i,\sigma(x)),
	\end{split}
\end{align}
in which $x := \col{x_1, \dots, x_N} \in \R^\n$ is the global decision vector, with each $x_i \in \R^{\n_i}$ and $\n = \sum_{i=1}^{N} \n_i$.
Each function $\f_i: \R^{\n_i} \times \R^d \to \R$ represents the local objective function of agent $i$, while the aggregative variable $\sigma(x)$ has the form
\begin{align}\label{eq:sigma}
    \sigma(x) := \dfrac{1}{N}\sum_{i=1}^{N}\phii(x_i),
\end{align}
where $\phi_i: \R^{\n_i} \to \R^d$ gives the $i$-th contribution to $\sigma(x)$. %
We cast problem~\eqref{eq:aggregative_optimization_problem} in the form of~\eqref{eq:VI_problem_distributed} by setting $\Xv = \R^{\n}$ and
\begin{align*}
	\F(x) \!\! = \!\! \begin{bmatrix}
		\dfrac{\partial \sum_{i=1}^N \f_i(x_i,\sigma(x))}{\partial x_1}
		&
		\!\! \hdots \!\!
		&
		\dfrac{\partial \sum_{i=1}^N \f_i(x_i,\sigma(x))}{\partial x_N}
	\end{bmatrix}\T\!\!\!\!. %
\end{align*}

\subsubsection*{Aggregative Games over Networks with Linear Coupling Constraints} $N$ agents compete with each other with the aim of minimizing their own costs depending on the strategies of all the other agents.
Formally, agent $i$ aims at solving
\begin{equation}
	\begin{aligned}\label{eq:games}
		&\underset{x_i \in \R^{\n_i}}{\min} && \Ji(x_i, \sigma(x))\\
		&~\textrm{ s.t. } && \sum_{j=1}^N A_jx_j \leq \sum_{j=1}^N b_j,
	\end{aligned}
\end{equation}
where the elements $A_i \in \R^{m \times n_i}$ and $b_i \in \R^{\m}$ model the set of feasible strategies, while the cost function $\map{\Ji}{\R^{\n_i} \times \R^d}{\R}$ depends on the $i$-th individual strategy $x_i \in \R^{\n_i}$, as well as on the aggregative variable $\sigma(x) \in \R^d$ defined as in~\eqref{eq:sigma}, with $x := \col{x_1,\dots,x_N} \in \R^n$, $\n := \sum_{i=1}^N n_i$. 
We define the constraint functions $c_i(x_i) := A_ix_i - b_i$, $\cmi(\xmi) := \sum_{j \in \set \setminus \{i\}}(A_j x_j - b_j)$, and $c(x) := c_i(x_i) + \cmi(\xmi) = Ax - b$,
where $\xmi := \col{x_1,\dots,x_{i-1},x_{i+1},\dots,x_{N}} \in \R^{\n - \n_i}$, $A := \left[ A_1 \, \dots \, A_N\right] \in \R^{m\times \n}$, and $b := \sum_{i=1}^N b_i$. 
Then, the collective vector of strategies $x$ must belong to the feasible set $\cC := \{x \in \R^{\n} \mid c(x) \leq 0\} \subseteq \R^\n$. %
We refer to any equilibrium solution to the set of inter-dependent optimization problems~\eqref{eq:games} as aggregative Generalized Nash Equilibrium (GNE)~\cite{FacchineiKanzowGNE2010} (or simply GNE), which is formally defined as follows.
\begin{definition}[\textup{GNE} \cite{FacchineiKanzowGNE2010}]\label{Def:GNE}
A collective vector of strategies $\xstar \in \cC$ is a GNE of \eqref{eq:games} if, for all $i \in \set$, we have:
\begin{equation*}
	\Ji(\xstari, \sigma(\xstar)) \leq \underset{x_i\in\cCi(\xstarmi)}{\min} \, \Ji(x_i,\tfrac{1}{N}\phii(x_i) + \sigmai(\xstarmi)), %
\end{equation*}
with $\cCi(\xmi) := \{x_i\in \X_i \mid A_ix_i \leq b_i - \cmi(\xmi)\}$, $\xstarmi := \col{\xstar_1,\dots,\xstar_{i-1},\xstar_{i+1},\dots,\xstar_N}$, and $\sigmai(\xstarmi) := \sum_{j=1,j \ne i}^N \phi_j(\xstar_j)$.\oprocend
\end{definition} 
By addressing a VI in the form of~\eqref{eq:VI_problem_distributed} in which $\Xv = \cC$ and
\begin{align*}%
	\F(x) = \begin{bmatrix}
		\dfrac{\partial \J_1(x_1,\sigma(x))}{\partial x_1}
		&
		\hdots
		&
		\dfrac{\partial \J_N(x_N,\sigma(x))}{\partial x_N}
	\end{bmatrix}\T,%
\end{align*}
one finds a subset of the GNEs of problem~\eqref{eq:games} named \emph{variational} GNEs (v-GNEs).
As in the constraint-coupled setup, the constraint formalized by the feasible set $\Xv$ must be satisfied in an asymptotic sense.
\begin{remark}
	Many existing works focus on games in which each cost function $J_i$ depends on all the single strategies $x_j$, namely each agent aims at minimizing $\J_i(x_i,\xmi)$ (see, e.g.,~\cite{cenedese2021asynchronous,bianchi2022fast,belgioioso2022distributed}).
	However, we remark that the aggregative game~\eqref{eq:games} recovers this class of games by setting
		$\sigma(x) := x$,
	i.e., players need to access the whole vector of strategies.\oprocend
\end{remark}

\section{Unifying Theory for distributed algorithms: Systematic Design and Formal Guarantees}
\label{sec:systematic_design}

In this section, we provide a comprehensive approach to design an
iterative distributed algorithm aimed at solving
problem~\eqref{eq:VI_problem_distributed}.
In Section~\ref{sec:centralized_algortihm}, we start from a \centralized/ algorithm
(for optimization or games) run by $N$ agents aided by a central unit named \emph{aggregator} (or \emph{master}).
The aggregator collects/spreads data from/to all the agents of the
network and performs computations involving all variables.
Then, in Section~\ref{sec:consensus_oriented}, we develop a naive
version of a fully distributed algorithm in which we interlace the
original optimization-oriented scheme with an inner, \cnsor one.
Specifically, we introduce local proxies that asymptotically work as the aggregator.
Finally, in Section~\ref{sec:toward_a_distributed}, we suitably modify
this naive version to simultaneously run, in a distributed
fashion, both the optimization- and \cnsor schemes.

\subsection{\Centralized/ Meta-Algorithm for Optimization and Games}
\label{sec:centralized_algortihm}

We introduce a generic \centralized/ method to address problem~\eqref{eq:VI_problem_distributed}.
This scheme serves as an inspiring algorithm that we aim to replicate in a distributed manner. 
It is intentionally kept generic and does not represent a specific methodology.
Hence, we refer to it as the \emph{\centralized/ meta-algorithm}.
In detail, given the iteration index $\iter \in \N$
and $\nstatei \in \N$, each agent $i \in \set$, at iteration
$\iter$, maintains a local algorithmic state
$\stateit \in \R^{\nstatei}$ and a local output $\xit \in \R^{\n_i}$ representing the current estimate of the $i$-th block of a solution to problem~\eqref{eq:VI_problem_distributed}.
Although the variables $\stateit$ and $\xit$ are usually related, they do not necessarily coincide.
Indeed, the algorithmic state $\stateit$ may also contain, e.g., local
multipliers $\lit$ to enforce local and/or global constraints (see,
e.g.,~\cite{carnevale2022tracking}), as well as local momentum terms
$m_i\ud\iter$ to improve the convergence features of the algorithm
(see, e.g.,~\cite{carnevale2020gtadam}).
Agent $i$ iteratively updates $(\stateit,\xit)$ according to a
specific protocol implementable in a \centralized/ manner. 
Posing $\statet := \col{\statet_1,\dots,\statet_N} \in \R^{\nstate}$ with $\nstate \!:=\! \sum_{i=1}^N \nstatei$, we generally describe this centralized meta-algorithm as 
\begin{subequations}\label{eq:local_centralized_method}
	\begin{align}
		\stateitp &= \ali(\stateit,\agg(\statet))\label{eq:local_centralized_method_state}
		\\
		\xit &= \outi(\stateit),\label{eq:local_centralized_method_output}
	\end{align}
\end{subequations}
where $\ali: \R^{\nstatei} \times \R^\na \to \R^{\nstatei}$ represents the
local algorithm dynamics, $\outi: \R^{\nstatei} \to \R^{\n_i}$
represents the local output map, while $\agg: \R^{\nstate} \to \R^\na$
is an \emph{aggregation} function that couples all the local
quantities $\stateit$.
Specifically, $\agg(\state)$ represents a solution to a $\agg$-consensus
problem, i.e., the problem of finding the collective information
(e.g., the mean, the maximum, the minimum, etc...) encoded in $\agg$
about the local quantities $\state_1, \dots, \state_N$.
The specific form of $\ali$ is motivated by the distributed paradigm we aim at aligning with.
Indeed, we explicitly identified the dependency on the local (and available)
information $\stateit$ and the global (and unavailable) one encoded in $\agg(\statet)$, which needs to be computed by the aggregator.
In Section~\ref{sec:federated_algorithm}, for each setup presented in Section~\ref{sec:problem}, we provide a centralized algorithm that effectively solves the considered setup and we explicitly show that it matches the general form considered in~\eqref{eq:local_centralized_method}.
The column-stacked version of~\eqref{eq:local_centralized_method}
reads as%
\begin{subequations}\label{eq:centralized_method}
	\begin{align}
		\statetp &= \al(\statet,\1\agg(\statet))\label{eq:centralized_method_state}
		\\
		\xt &= \out(\statet),\label{eq:centralized_method_output}
	\end{align}
\end{subequations}
in which $\xt := \col{\xt_1,\dots,\xt_N} \in \R^{\n}$, while $\al: \R^{\nstate} \times \R^{N\na} \to \R^{\nstate}$ and $\out: \R^{\nstate} \to \R^{\n}$ read as 
\begin{align*}
	\al(\state,v) &:= \begin{bmatrix}
		\al_1(\state_1,v_1)\T
		&
		\hdots
		&
		\al_N(\state_N,v_N)\T
	\end{bmatrix}\T
	\\
	\out(\state) &:= \begin{bmatrix}
		\out_1(\state_1)\T
		&
		\hdots 
		&
		\out_N(\state_N)\T
	\end{bmatrix}\T,
\end{align*}
where $v := \col{v_1,\dots,v_N} \in \R^{N\na}$ with $v_i \in \R^\na$ for all $i \in \set$.

\subsection{Naive Double Loop Distributed Algorithm}
\label{sec:consensus_oriented}

In a network of peer-to-peer agents, the central aggregator is not available and, thus, the update~\eqref{eq:centralized_method_state} cannot be implemented. 
To compensate for this lack of knowledge, we equip each agent $i$ with an auxiliary variable $\zit \in \R^{\nzi}$ providing a local proxy $\taggi(\stateit,\zit) \in \R^{\na}$
about the unavailable, aggregate quantity $\agg(\state)$.
In other words, each $\zit$ is the local state variable of a consensus-oriented mechanism that we need to embed in the overall algorithm to make its execution possible in a distributed fashion.
We do that by modifying the local update~\eqref{eq:local_centralized_method_state} as
\begin{align}\label{eq:local_update_with_proxies}
	\stateitp = \ali(\stateit,\underbrace{\taggi(\stateit,\zit)}_{\mathclap{\text{proxy for } \agg(\statet)}}).
\end{align}
Next, we show a naive distributed algorithm, including an inner loop
for the update of $\zit$, that helps to understand the rationale of the
actual distributed algorithmic framework and the line of proof that we will present
in the following sections.
To this end, we introduce an inner iteration index $\initer \in \N$ and, for each $i \in \set$, we consider the double-loop algorithm
\begin{subequations}\label{eq:doublescale}
	\begin{align}
		\text{{\bf{for }}}&\iter=1, 2, \dots
		\notag\\
		&\stateitp = \ali(\stateit,\taggi(\stateit,\z_i\ud{\iter,\infty}))\label{eq:doublescale_opt}
		\\
		&\text{{\bf{for }}}\initer=1, 2, \dots
		\notag\\
		&\hspace{.6cm}\zittp = \tri(\statenit,\znitt), \label{eq:local_update_trackers}  %
	\end{align}
\end{subequations}
where $\z_i\ud{\iter,\infty} := \lim_{\tau \to \infty}\z_i\ud{\iter,\tau}$, $\tri: \R^{\sum_{j \in \cN_i} \nstatej} \times \R^{\sum_{j \in \cN_i} \nzj} \to \R^{\nzi}$ is a generic description of the \cnsor dynamics, while %
$\statenit \in \R^{\sum_{j \in \cN_i} \nstatej}$ and $\znitt \in
\R^{\sum_{j \in \cN_i} \nzj}$ collect the quantities maintained by the in-neighbors
of agent $i$, namely 
\begin{align*}
	\statenit &:= \col{\statejt}_{j \in \cN_i}, \quad \znitt := \col{\zjtt}_{j \in \cN_i},
\end{align*}
for all $i \in \set$.
In Section~\ref{sec:dynamic_average_consensus}, we will provide explicit examples of this consensus-oriented dynamics in the case in which the $\agg$-consensus problem reconstruction coincides with an average consensus problem, i.e., in the case in which $\agg(\state)$ represents the mean of local quantities (e.g., gradients, solution estimates, or constraints).
To generalize, as it will be clear in the sequel, the inner update must be designed to 
asymptotically solve the $\agg$-consensus problem, i.e., the trajectories of~\eqref{eq:local_update_trackers} must satisfy
\begin{align*}
	\lim_{\initer\to\infty}\taggi(\stateit,\zitt) = \agg(\statet),%
\end{align*}
for all $\statet \in \R^{\nstate}$ and $i \in \set$.
We conclude this part by providing the column-stacked version of~\eqref{eq:local_update_trackers}, namely%
\begin{align}\label{eq:consensus_oriented} 
	\zttp = \tr(\statet,\ztt),
\end{align}
where, given $\nz := \sum_{i=i}^N \nzi$, we introduced $\tr(\state,\z):= \begin{bmatrix}
		\tr_1(\state_{\cN_1},\z_{\cN_1})\T
		&
		\hdots
		&
		\tr_N(\state_{\cN_N},\z_{\cN_N})\T
	\end{bmatrix}\T \in \R^{\nz}$.
We point out that algorithm~\eqref{eq:doublescale} is not practically implementable since agents should run the inner loop forever or share a common iteration index to switch between the inner and outer loops.

\subsection{Distributed Meta-Algorithm for Optimization and Games}
\label{sec:toward_a_distributed}

We follow a key intuition based on singular perturbations to
transform~\eqref{eq:doublescale} in a fully distributed
algorithm without an inner loop.
To do that, we suitably modify~\eqref{eq:doublescale_opt} and~\eqref{eq:local_update_trackers} to create an interconnected system with
a fast and a slow dynamics.
To this end, we introduce a parameter $\pr > 0$ allowing for an
arbitrary tuning of the variations of $\statet$, so
that~\eqref{eq:local_update_with_proxies} becomes
\begin{align}
	\stateitp = \stateit + \pr(\ali(\stateit,\taggi(\stateit,\zit)) -
  \stateit). \label{eq:local_update_with_step}
\end{align}
By simultaneously running~\eqref{eq:local_update_with_step} and a single step
of~\eqref{eq:local_update_trackers}, we obtain our distributed meta-algorithm for optimization and games and we report it in Algorithm~\ref{alg:distributed_scheme}.
\begin{algorithm}
\begin{algorithmic}
\For{$\iter=1, 2, \dots$}
\begin{subequations}\label{eq:local_update}
	\begin{align}
		\stateitp &= \stateit + \pr(\ali(\stateit,\taggi(\stateit,\zit)) - \stateit)\label{eq:local_update_wi}
		\\
		\zitp &= \tri(\statenit,\znit)\label{eq:local_update_zi}
		\\
		\xit &= \outi(\stateit)
	\end{align}
\end{subequations}
\EndFor
\end{algorithmic}
\caption{Distributed Meta-Algorithm (Agent $i$)}
\label{alg:distributed_scheme}
\end{algorithm}
We remark that it only uses variables locally available for agent $i$ and, thus, can be implemented in a fully distributed manner.
Although our primary objective is to provide a tool for developing and analyzing novel distributed methods, it is noteworthy that the general form of the generic distributed meta-algorithm reported in Algorithm~\ref{alg:distributed_scheme} encompasses several existing methods.
Just to name a few, we mention ADMM-Tracking Gradient~\cite{carnevale2023admm} for consensus optimization~\eqref{eq:consensus_optimization}, Distributed Aggregative Gradient Tracking~\cite{li2021distributed} for aggregative optimization~\eqref{eq:aggregative_optimization_problem}, and Primal TRADES~\cite{carnevale2022tracking} for aggregative games with local constraints~\eqref{eq:games}.

By collecting all the local updates~\eqref{eq:local_update}, the stacked-column description of our distributed meta-algorithm reads as
\begin{subequations}\label{eq:global_update}
	\begin{align}
		\statetp &= \statet + \pr(\al(\statet,\tagg(\statet,\zt))-\statet)\label{eq:global_update_chi}
		\\
		\ztp &= \tr(\statet,\zt)\label{eq:global_update_z}
		\\
		\xt &= \out(\statet),
	\end{align}
\end{subequations}
where $\tagg(\state,\z) := \begin{bmatrix}
		\tagg_1(\state_1,\z_1)\T
		&
		\hdots
		&
		\tagg_N(\state_N,\z_N)\T
	\end{bmatrix}\T$.
The obtained distributed method interconnects the optimization-oriented subsystem~\eqref{eq:global_update_chi} and the \cnsor one~\eqref{eq:global_update_z}, see Fig.~\ref{fig:scheme} for a schematic view.
\begin{figure}
	\centering
	\includegraphics[scale=1.1]{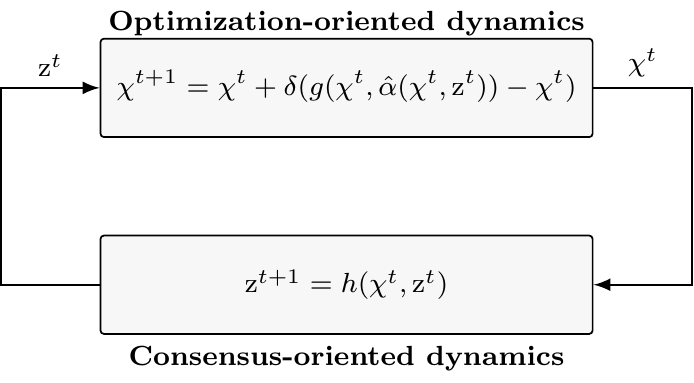}
	\caption{Block diagram of~\eqref{eq:global_update}.}
	\label{fig:scheme}
\end{figure}
Intuitively, with a sufficiently small $\pr$, the \cnsor scheme deals with a ``quasi-static'' $\statet$,
so that $\tagg(\statet,\zt) \approx \1\agg(\statet)$.
Then, the optimization-oriented scheme mimics a low-pass filter of the
desired and centralized meta-algorithm~\eqref{eq:centralized_method}.
Based on this intuition, we formalize as follows the necessary
properties of the \centralized/ optimization meta-algorithm~\eqref{eq:centralized_method}
and the \cnsor scheme~\eqref{eq:consensus_oriented} separately considered.
We start from the required properties about the centralized method~\eqref{eq:centralized_method}.
Then, in Section~\ref{sec:federated_algorithm}, for each setup considered in Section~\ref{sec:problem}, we provide a centralized method that satisfies these assumptions.
\begin{assumption}[Centralized method~\eqref{eq:centralized_method} equilibria]\label{ass:centralized_equilibria}
$\Xstar$ is non-empty and there exist non-empty sets $\cXs,\cSc \! \subseteq \! \R^{\nstate}$ such that
\begin{enumerate}
	\item $\cXs \! := \! \{\statestar \in \R^{\nstate} \mid \statestar = \al(\statestar,\1\agg(\statestar)), \out(\statestar) \in \Xstar\}$.
	\\
	\item The set $\cSc \supset \cXs$ is convex and, for all $(\state,v) \in \cSc \times \R^{N\na}$, it holds $\al(\state,v) \in \cSc$.
	\oprocend
\end{enumerate}
\end{assumption}
Assumption~\ref{ass:centralized_equilibria} introduces the set $\cXs$ containing the equilibria $\statestar$ of the centralized method~\eqref{eq:centralized_method}
whose output $\xstar = \out(\statestar)$ is a solution to the variational inequality~\eqref{eq:VI_problem_distributed}.
As for the set $\cSc$, we clarify its role as follows.
\begin{remark}\label{rem:init_chi}
	  We introduced the forward-invariant set $\cSc$ to include optimization-oriented methods requiring a proper initialization (e.g., projection-based algorithms) and, thus, to enhance the class of setups and methods that can be captured by our framework.
	  To further stress that this is not a limitation, we note that in the case of initialization-free methods (e.g., in unconstrained programs), since we do not require the boundedness of $\cSc$, we can simply set $\cSc \equiv \R^{\nstate}$.\oprocend
\end{remark}
The next assumption characterizes the convergence properties of the centralized method~\eqref{eq:centralized_method} by introducing a suitable descent-like function $\Vc$.
Such a function $\Vc$ may be, e.g., the overall cost function in the case of unconstrained programs (see~\cite{carnevale2022nonconvex}) or the so-called gap function (see~\cite{mastroeni2003gap}) in the case of games.
\begin{assumption}[Convergence of centralized method~\eqref{eq:centralized_method}]\label{ass:centralized_convergence}
	There exist a radially unbounded, continuously differentiable $\Vc: \R^{\nstate} \to \R$, $\dd: \R^{\nstate} \to \R_+$, and $c_1, \cf, \cff, \cfff > 0$
	such that%
	\begin{subequations}\label{eq:Vc}
		\begin{align}
			\nabla\Vc (\state) \T  \left(\al(\state,\1\agg(\state))  -  \state\right)  + \tfrac{c_3}{2}\norm{\al(\state,\1\agg(\state)
			) -   \state}^2
			&\leq  
			-c_1 \dd(\state)^2
			\label{eq:minus_Vc}
			\\
			\norm{\nabla\Vc(\state)} &\leq\cf\dd(\state)
			\label{eq:nabla_vc_dd}			
			\\
			\norm{\nabla\Vc(\state) - \nabla\Vc(\state^\prime)}  &\leq  \cff\norm{\state - \state^\prime}\label{eq:lipschitz_Vc}
			\\
			\norm{\al(\chi,\1\agg(\chi)) - \state}
			&\leq 
			\cfff\dd(\state),
			\label{eq:bound_dd}
		\end{align}
	\end{subequations}
	for all $\state, \state^\prime \in \cSc$. %
	Moreover, 
	$\statestar \in \{\state \in \R^{\nstate} \mid \dd(\state) = 0\} \implies \statestar \in \cXs$.
	Further, $\al$ and $\out$ are $\lippa$- and $\lipp_{\out}$-Lipschitz continuous over $\cSc \times \R^{N\na}$ and
	$\cSc$, for some $\lippa, \lipp_{\out} \!>\! 0$.
	\oprocend
\end{assumption}
The next assumption focuses on variational inequalities~\eqref{eq:VI_problem_distributed} with a unique solution $\xstar \in \Xvstar$ and centralized methods~\eqref{eq:centralized_method} with exponential stability guarantees or, equivalently, a linear convergence rate.
These stronger guarantees are commonly achieved in the case of strongly convex cost functions (optimization) or strongly monotone pseudo-gradients (game theory).
\begin{assumption}[Exp. stability for centralized method~\eqref{eq:centralized_method}]
	\label{ass:additional_assumption}
	Given $\statestar \in \R^{\nstate}$ such that $\xstar = \out(\statestar) \in \Xvstar$, it holds
	\begin{subequations}\label{eq:additional_condtions}
		\begin{align} 
			\underbar{c}\norm{\state - \statestar}^2 \leq \Vc(\state) &\leq \bar{c}\norm{\state - \statestar}^2\label{eq:additional_condtions_bounds}
			\\
			\dd(\state)^2 &\ge c\norm{\state - \statestar}^2,
			\label{eq:additional_condtions_minus}
		\end{align}
	\end{subequations}
	for all $\state \in \cSc$ and some $\underbar{c}, \bar{c}, c > 0$.\oprocend
\end{assumption}
\begin{remark}
	The role of Assumptions~\ref{ass:centralized_convergence} (resp.~\ref{ass:additional_assumption}) is to frame convergence results of given centralized methods within a system theory interpretation based on LaSalle's Invariance Principle (resp. Lyapunov's stability theory).
	Indeed, we recall that our goal is to develop a systematic approach to designing and analyzing distributed versions of existing optimization-oriented methods.
	Hence, we require convergence results of these inspiring methods to be available.\oprocend
\end{remark}
Analogously, we will now formalize the necessary features of the \cnsor scheme~\eqref{eq:consensus_oriented}.
Section~\ref{sec:dynamic_average_consensus} provides an overview of existing schemes satisfying these assumptions.

It is possible to decompose the most popular \cnsor dynamics (see the tutorial~\cite{kia2019tutorial} about dynamic average consensus) into two suitable components, one of which has a forward-invariant set, while the other has a stable equilibrium parametrized in $\state$.
In turn, $\agg$-consensus is achieved in configurations where the first part lies on the mentioned set and the other one in this equilibrium.
To make these schemes eligible in our systematic procedure, we introduce a change of variables $\TT: \R^{\nz} \to \R^{\nz}$ that transforms $\z$ according to
\begin{align}
	\z \longmapsto
		\begin{bmatrix}
			\bz
			\\
			\pz 
		\end{bmatrix}
		:= 
		\TT(\z) := 
		\begin{bmatrix}
			\Tb(\z)
			\\
			\Tp(\z)
		\end{bmatrix},\label{eq:change_of_variables}
\end{align}
with $\Tb: \R^{\nz} \to \R^{\nb}$ and $\Tp: \R^{\nz} \to \R^{\np}$ with $\nb + \np = \nz$.
The transformation $\TT$ allows us to rewrite system~\eqref{eq:global_update} as 
\begin{subequations}\label{eq:global_update_decomposed_int}
	\begin{align}
		\statetp &= \statet + \delta(\al(\statet,\tagg(\statet,\TTi(\col{\bzt,\pzt}))) - \statet)
		\label{eq:global_update_decomposed_int_chi}
		\\
		\bztp &= \Tb(\tr(\statet,\TTi(\col{\bzt,\pzt})))
		\label{eq:global_update_decomposed_int_bz}
		\\
		\pztp &= \Tp(\tr(\statet,\TTi(\col{\bzt,\pzt})))
		\label{eq:global_update_decomposed_int_pz}
		\\
		\xt &= \out(\statet).
	\end{align}
\end{subequations}
In these new coordinates, we suppose the following invariance and orthogonality conditions.
We note that these conditions are common to the most widely used \cnsor schemes and, among others, to those presented later in Section~\ref{sec:dynamic_average_consensus}, which serve as building blocks for our systematic methodology.
\begin{assumption}(Invariance and orthogonality)
	\label{ass:orthogonality}
	There exists a forward-invariant set $\cSbz \subseteq \R^{\nb}$ for~\eqref{eq:global_update_decomposed_int_bz}.
	Moreover, there exist $\ptagg: \R^{\nstate} \! \times \! \R^{\np} \! \to \! \R^{N\na}$ and $\ptr: \R^{\nstate} \! \times \! \R^{\np} \! \to \! \R^{\np}$ such that%
	\begin{subequations}\label{eq:orthogonality}
		\begin{align}
			\tagg(\state,\TTi(\col{\bz,\pz})) &= \ptagg(\state,\pz)\label{eq:ptagg}
			\\
			\Tp(\tr(\state,\TTi(\col{\bz,\pz}))) &= \ptr(\state,\pz),\label{eq:ptr}
		\end{align}	
	\end{subequations}	
	for all $\state \!\in\! \R^{\nstate}$, $\bz \!\in\! \cSbz$, and $\pz \!\in\! \R^{\np}$.
	Further, $\ptagg$ and $\ptr$ are Lipschitz continuous with constants $\lipp_{\tagg}, \lipp_{\tr} \!>\! 0$, respectively.\oprocend
\end{assumption}
Now, we guarantee the existence of an equilibrium of~\eqref{eq:global_update_decomposed_int_pz} parametrized in $\state$ in which the $\agg$-consensus is solved.
\begin{assumption}(Parametrized equilibrium)
	\label{ass:equilibria_consensus}
	There exists $\pzeq: \R^{\nstate} \to \R^{\np}$ such that, for all $\state \in \R^{\nstate}$, it holds
	\begin{subequations}
		\begin{align}
			\ptagg(\state,\pzeq(\state)) &= \1\agg(\state)
			\label{eq:zeq_reconstruction}
			\\
			\pzeq(\state) &= \ptr(\state,\pzeq(\state)).\label{eq:zeq_equilibrium}
		\end{align}
	\end{subequations}
	Further, $\pzeq$ is Lipschitz continuous with constant $\lippeq > 0$.\oprocend
\end{assumption}
Conditions~\eqref{eq:ptagg} and~\eqref{eq:zeq_reconstruction} ensure that if $\bz \in \cSbz$ and $\pz = \pzeq(\state)$, the $\agg$-consensus problem is solved.
Since $\cSbz$ is forward-invariant for~\eqref{eq:global_update_decomposed_int_bz}, the set $\cSz:=\{\z\in \R^{\nz} \mid\Tb(\z) \in \cSbz\}$ turns out to be forward-invariant for the whole \cnsor part~\eqref{eq:global_update_z}.
Hence, by initializing system~\eqref{eq:global_update} with $\z\ud0\in\cSz$, one ensures $\bzt \in \cSbz$ for all $\iter \in \N$.
Thus, to assess the attractiveness of the entire $\agg$-consensus locus $\{(\bz,\pz) \in \R^{\nb} \times \R^{\np} \mid \bz \in \cSbz,\pz = \pzeq(\state)\}$, we now impose asymptotic stability properties for the equilibrium $\pzeq(\chi)$.
\begin{assumption}[Stability of \cnsor dynamics]
	\label{ass:tracking}
	There exist $\Vz: \R^{\np} \to \R$ and $b_1, b_2, b_3, b_4 > 0$ such that%
	\begin{subequations}\label{eq:Vz_theorem_distributed}
		\begin{align}
			&b_1\norm{\tz}^2 \leq \Vz(\tz) \leq b_2\norm{\tz}^2\label{eq:Vz_quadratic_distributed}
			\\
			&\Vz(\ptr(\state,\tz \! + \! \pzeq(\state)) \! - \! \pzeq(\state)) \! - \! \Vz(\tz) \! \leq \! - b_3\norm{\tz}^2 
			\\
			&|\Vz(\tz) - \Vz(\tz^\prime)| \leq b_4\norm{\tz - \tz^\prime}(\norm{\tz} + \norm{\tz^\prime}),
		\end{align}
	\end{subequations}
	for all $\state \in \R^{\nstate}$ and $\tz, \tz^\prime \in \R^{\np}$.
	\oprocend
\end{assumption}
Assumption~\ref{ass:tracking} ensures that if $\bz \in \cSbz$ and $\state$ were fixed, the point $\pzeq(\state)$ would be a globally exponentially stable equilibrium uniformly in $\state \in \R^{\nstate}$ of dynamics~\eqref{eq:global_update_decomposed_int_pz}.
The following remark provides insights on $\cSz$ and the transformation~\eqref{eq:change_of_variables}.
\begin{remark}\label{rem:init_z}
	The purpose of $\cSz$ is to enlarge the class of schemes usable in our design procedure.
	Indeed, as we will see also later, some popular \cnsor schemes (see~\cite{kia2019tutorial}) widely adopted in distributed optimization (see, e.g., \cite{shi2015extra,nedic2017achieving,xi2017add}) require initialization in a subset $\cSz$.
	We remark that this choice does not represent a limitation. 
	Indeed, in the case of initialization-free protocols, since no boundedness of $\cSbz$ has been required, one may simply set $\cSbz \equiv \R^{\nb} \implies \cSz \equiv \R^{\nz}$.
	Analogously, we also note that requiring stability properties only on a portion of the dynamics of $\z$ (see Assumption~\ref{ass:tracking}) is not restrictive: for schemes exhibiting these stability properties on their whole dynamics, one can set $\nb = 0$ and $\np = \nz$.
	\oprocend
\end{remark}
Once these assumptions have been enforced, we are ready to provide the following theorem showing that the state of the distributed method~\eqref{eq:global_update} converges to a proper set associated to the solutions of the variational inequality~\eqref{eq:VI_problem_distributed}.
Moreover, under the additional Assumption~\ref{ass:additional_assumption}, we also ensure exponential stability properties
and, in turn, linear convergence of $\xt$ toward the unique $\xstar \!\in\! \Xvstar$ solving the variational inequality~\eqref{eq:VI_problem_distributed}.
\begin{theorem}\label{th:main}
	Consider~\eqref{eq:global_update} and let Assumptions~\ref{ass:centralized_equilibria}, \ref{ass:centralized_convergence},~\ref{ass:orthogonality},~\ref{ass:equilibria_consensus}, and~\ref{ass:tracking} hold for a suitable $\TT$ as in~\eqref{eq:change_of_variables}.
	Then, there exists $\bar{\pr} \in (0,1)$, such that, for all $\pr \in (0,\bar{\pr})$ and $(\state\ud 0,\z\ud 0) \in \cSc \times \cSz$, the trajectories of~\eqref{eq:global_update} satisfy%
	\begin{subequations}\label{eq:lasalle_convergence}
		\begin{align}
			\lim_{\iter \to\infty}\norm{\statet}_{\cXs} &= 0\label{eq:lasalle_convergence_chi}
			\\
			\lim_{\iter \to\infty}\norm{\Tp(\zt) - \pzeq(\statet)} &= 0\label{eq:lasalle_convergence_z}
			\\
			\lim_{\iter \to\infty}\norm{\xt}_{\Xstar} &= 0.\label{eq:lasalle_convergence_xt}
		\end{align}	
	\end{subequations}
	If further Assumption~\ref{ass:additional_assumption} holds true, then there exist $r_1, r_2, r_3 > 0$ such that, for all $\pr \in (0,\bar{\pr}$) and $(\state\ud 0,\z\ud 0) \in \cSc \times \cSz$, the trajectories of~\eqref{eq:global_update} satisfy%
	\begin{subequations}\label{eq:additional}
		\begin{align}
			\norm{
				\!
				\begin{bmatrix}
				\statet
				\\
				\Tp(\zt)
			\end{bmatrix} 
			\!\! - \!\!
			\begin{bmatrix}
				\statestar
				\\
				\pzeq(\statet)
			\end{bmatrix}} 
			\!&\leq\! 
			r_2\!\norm{\begin{bmatrix}
				\state\ud0
				\\
				\Tp(\z\ud0)
			\end{bmatrix} 
			\!\! - \!\!
			\begin{bmatrix}
				\statestar
				\\
				\pzeq(\state\ud0)
			\end{bmatrix}} \! 
			\exp(-r_1\iter)
			\label{eq:additional_result_states}
			\\
			\norm{\xt - \xstar} &\leq r_3\exp(-r_1\iter),
			\label{eq:additional_result}
		\end{align}
	\end{subequations}
	where $\xstar = \out(\statestar)$. 
\end{theorem}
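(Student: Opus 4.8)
The plan is to cast system~\eqref{eq:global_update} into the standard singularly perturbed (SP) normal form and then invoke the general two‑timescale result of Theorem~\ref{th:generic}, supplying it with the slow‑subsystem LaSalle/exponential properties extracted from Assumptions~\ref{ass:centralized_equilibria}--\ref{ass:centralized_convergence} (plus Assumption~\ref{ass:additional_assumption} for the exponential statement) and with the fast‑subsystem exponential stability coming from Assumptions~\ref{ass:orthogonality}--\ref{ass:tracking}. As a first step, I would apply the change of variables~\eqref{eq:change_of_variables} to rewrite~\eqref{eq:global_update} as~\eqref{eq:global_update_decomposed_int}. Initializing with $\z\ud 0 \in \cSz$ and using the forward invariance of $\cSbz$ postulated in Assumption~\ref{ass:orthogonality}, one has $\bzt = \Tb(\zt) \in \cSbz$ for all $\iter$, so the identities~\eqref{eq:ptagg}--\eqref{eq:ptr} hold along the trajectories. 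Consequently the pair $(\statet,\pzt)$ evolves autonomously as $\statetp = \statet + \pr(\al(\statet,\ptagg(\statet,\pzt)) - \statet)$, $\pztp = \ptr(\statet,\pzt)$, whereas $\bzt=\Tb(\zt)$ merely rides along and never enters the claims~\eqref{eq:lasalle_convergence}--\eqref{eq:additional}, which involve only $\statet$, $\Tp(\zt)=\pzt$ and $\xt=\out(\statet)$.

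Next, I would introduce the error coordinate $\tzt := \pzt - \pzeq(\statet)$. Using~\eqref{eq:zeq_equilibrium} the $(\statet,\tzt)$ subsystem becomes
\begin{align*}
	\statetp &= \statet + \pr\big(\al(\statet,\ptagg(\statet,\tzt + \pzeq(\statet))) - \statet\big), \\
	\tztp &= \ptr(\statet,\tzt + \pzeq(\statet)) - \pzeq(\statet) + \big(\pzeq(\statet) - \pzeq(\statetp)\big),
\end{align*}
and since $\statetp-\statet$ is $O(\pr)$ (Lipschitz continuity of $\al$ on $\cSc\times\R^{N\na}$ and forward invariance of $\cSc$) while $\pzeq$ is Lipschitz, the last term is $O(\pr)$; this is exactly the prototype handled by Theorem~\ref{th:generic}, with slow state $\statet$, fast state $\tzt$ and small parameter $\pr$. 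Its boundary‑layer (fast) system, obtained by freezing $\statet\equiv\state$, is $\tztp = \ptr(\state,\tzt+\pzeq(\state))-\pzeq(\state)$, whose origin is globally exponentially stable uniformly in $\state$ precisely by Assumption~\ref{ass:tracking} with the Lyapunov function $\Vz$. Setting instead $\tzt=0$, identity~\eqref{eq:zeq_reconstruction} gives $\ptagg(\statet,\pzeq(\statet))=\1\agg(\statet)$, so the reduced (slow) dynamics collapse to the low‑pass filter $\statetp=\statet+\pr(\al(\statet,\1\agg(\statet))-\statet)$ of the centralized meta‑algorithm~\eqref{eq:centralized_method}; along it, a standard descent estimate based on the $c_3$‑Lipschitz gradient bound~\eqref{eq:lipschitz_Vc} together with~\eqref{eq:minus_Vc} yields $\Vc(\statetp)\le\Vc(\statet)-\pr c_1\dd(\statet)^2$ for $\pr<1$, while~\eqref{eq:bound_dd} bounds the per‑step increment. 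Together with the radial unboundedness of $\Vc$ and the implication $\dd(\state)=0\Rightarrow\state\in\cXs$ from Assumption~\ref{ass:centralized_convergence}, this is the LaSalle‑type slow‑subsystem hypothesis of Theorem~\ref{th:generic}; under Assumption~\ref{ass:additional_assumption}, inequalities~\eqref{eq:additional_condtions} sharpen it to $\Vc(\statetp)\le(1-\pr\kappa)\Vc(\statet)$ for some $\kappa>0$, i.e.\ exponential stability of $\statestar$ for the reduced system.

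It then remains to check the interconnection (cross‑Lipschitz) hypotheses of Theorem~\ref{th:generic}: by Assumptions~\ref{ass:orthogonality} and~\ref{ass:centralized_convergence} the perturbation of the slow dynamics obeys $\norm{\al(\statet,\ptagg(\statet,\tzt+\pzeq(\statet)))-\al(\statet,\1\agg(\statet))}\le\lippa\lipp_{\tagg}\norm{\tzt}$, while the coupling term $\pzeq(\statet)-\pzeq(\statetp)$ is bounded by $O(\pr)(\norm{\tzt}+\dd(\statet))$ through Lipschitz continuity of $\pzeq$, $\al$, $\ptagg$ and~\eqref{eq:bound_dd}. Feeding these bounds into Theorem~\ref{th:generic} produces $\bar{\pr}\in(0,1)$ such that, for all $\pr\in(0,\bar{\pr})$ and $(\state\ud 0,\z\ud 0)\in\cSc\times\cSz$, one has $\norm{\statet}_{\cXs}\to0$ and $\norm{\tzt}\to0$, which are~\eqref{eq:lasalle_convergence_chi}--\eqref{eq:lasalle_convergence_z}; then~\eqref{eq:lasalle_convergence_xt} follows since $\out(\cXs)\subseteq\Xstar$ (Assumption~\ref{ass:centralized_equilibria}) and $\out$ is $\lipp_{\out}$‑Lipschitz, because $\norm{\xt}_{\Xstar}\le\lipp_{\out}\norm{\statet}_{\cXs}$. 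If in addition Assumption~\ref{ass:additional_assumption} holds, the exponential branch of Theorem~\ref{th:generic} gives~\eqref{eq:additional_result_states}, and~\eqref{eq:additional_result} follows once more from Lipschitz continuity of $\out$ and~\eqref{eq:additional_condtions_bounds}.

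The hard part is the interconnection step: the $O(\pr)$ correction $\pzeq(\statet)-\pzeq(\statetp)$ appearing in the fast equation genuinely couples the two timescales and must be absorbed into the exponential contraction provided by $\Vz$ without spoiling the LaSalle/exponential decrease of $\Vc$ along the slow part — it is precisely this balance that forces $\bar{\pr}$ to be small. Making it quantitative, so that the composite Lyapunov argument underlying Theorem~\ref{th:generic} closes, requires tracking the interplay of all the constants $c_1,\dots,c_4$, $b_1,\dots,b_4$, $\lippa$, $\lipp_{\tagg}$, $\lipp_{\tr}$, $\lippeq$ and $\lipp_{\out}$; the remainder of the argument is just matching our assumptions to the hypotheses of Theorem~\ref{th:generic}.
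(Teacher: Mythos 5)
Your proposal follows essentially the same route as the paper: change of variables via $\TT$, discarding the $\bz$-component by forward invariance of $\cSbz$, identifying the boundary-layer and reduced systems, verifying the hypotheses of Theorem~\ref{th:generic} with $\Vz$ and $\Vc$ (descent lemma for the slow part), and then sharpening to exponential stability under Assumption~\ref{ass:additional_assumption}. The only elided step is that Theorem~\ref{th:generic} delivers just the composite Lyapunov decrease $\Delta V \leq -\psi(\dd(\statet)^2 + \norm{\tzt}^2)$, so concluding $\norm{\statet}_{\cXs}\to 0$ still requires an explicit application of a discrete-time LaSalle invariance principle together with the implication $\dd(\state)=0\implies\state\in\cXs$, which the paper carries out but your sketch folds into the citation of Theorem~\ref{th:generic}.
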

\begin{proof}
	First of all, since $\cSc$ and $\cSz$ are forward-invariant for~\eqref{eq:centralized_method} and~\eqref{eq:global_update_z} (cf. Assumptions~\ref{ass:centralized_equilibria} and~\ref{ass:orthogonality}), $\cSc$ is convex (cf. Assumption~\ref{ass:centralized_equilibria}), $\pr \in (0,1)$, and $(\state\ud0,\z\ud0) \in \cSc \times \cSz$, we note that $\cSc \times \cSz$ is forward invariant for system~\eqref{eq:global_update}.
	Now, we use the change of variables~\eqref{eq:change_of_variables} and, thus, the equivalent formulation provided in~\eqref{eq:global_update_decomposed_int}.
	Since $\z\ud0 \in \cSz$, it holds $\bz\ud0 \in \cSbz$ by construction.
	Being $\cSbz$ forward invariant for~\eqref{eq:global_update_decomposed_int_bz} (cf. Assumption~\ref{ass:orthogonality}), it holds $\bzt \in \cSbz$ for all $\iter \in \N$.
	Thus, in light of the orthogonality condition~\eqref{eq:orthogonality} in Assumption~\ref{ass:orthogonality}, $\bzt$ does not affect the other system states.
	Thus, we completely ignore it and use~\eqref{eq:orthogonality} to rewrite~\eqref{eq:global_update_decomposed_int} as
	\begin{subequations}\label{eq:global_update_decomposed}
		\begin{align}
			\statetp &= \statet + \delta(\al(\statet,\ptagg(\statet,\pzt)) - \statet)
			\label{eq:global_update_decomposed_slow}
			\\
			\pztp &= \ptr(\statet,\pzt).\label{eq:global_update_decomposed_fast}
		\end{align}
	\end{subequations}
	After this preliminary phase, we analyze~\eqref{eq:global_update_decomposed} by applying Theorem~\ref{th:generic} (in Appendix~\ref{sec:SP}), i.e., we interpret it as an SP system, see their generic formulation provided in~\eqref{eq:interconnected_system}.
	In particular, $\statet$ and~\eqref{eq:global_update_decomposed_slow} are the slow state and subsystem, while $\pzt$ and~\eqref{eq:global_update_decomposed_fast} are the fast ones.
	We then proceed by checking that all the assumptions required by Theorem~\ref{th:generic} are fulfilled.
	In light of~\eqref{eq:zeq_equilibrium}, we note that $\pzeq$ provides the equilibrium function of the fast part~\eqref{eq:global_update_decomposed_fast} parametrized in $\statet$.
	Then, the so-called boundary layer system associated to~\eqref{eq:global_update_decomposed}, i.e., the fast dynamics~\eqref{eq:global_update_decomposed_fast} written using the error coordinates $\tzt := \pzt - \pzeq(\statet)$ and an arbitrarily fixed $\statet = \state \in \R^{\nstate}$, reads as 
	\begin{align}
		\tztp = \ptr(\state,\tzt + \pzeq(\state)) - \pzeq(\state).\label{eq:bl}
	\end{align}
	The Lyapunov function $\Vz$ (cf. Assumption~\ref{ass:tracking}) ensures that the origin is globally exponentially stable for~\eqref{eq:bl} as required by Theorem~\ref{th:generic} in~\eqref{eq:Vz}.
	We proceed by studying the so-called reduced system associated to~\eqref{eq:global_update_decomposed}, i.e., the slow dynamics~\eqref{eq:global_update_decomposed_slow} with $\pzt \!=\! \pzeq(\statet)$ for all $\iter \in \N$.
	This auxiliary system reads as 
	\begin{align}\label{eq:reduced_system_implicit}
		\statetp = \statet + \pr(\al(\statet,\ptagg(\statet,\pzeq(\statet)))- \statet).
	\end{align}
	The reconstruction property~\eqref{eq:zeq_reconstruction} ensures $\ptagg(\statet,\pzeq(\statet)) = \1\agg(\statet)$.
	Hence, we introduce $\ala(\state) := \al(\state,\1\agg(\state))$ to lighten up the notation and equivalently rewrite system~\eqref{eq:reduced_system_implicit} as
	\begin{align}\label{eq:reduced_system}
		\statetp = \statet + \pr(\ala(\statet) - \statet).
	\end{align}
	To apply Theorem~\ref{th:generic}, we need a function satisfying the LaSalle conditions encoded in~\eqref{eq:Vc_theorem}.
	To this end, we pick the function $\Vc$ introduced in Assumption~\ref{ass:centralized_convergence}. 
	Thus, in light of~\eqref{eq:nabla_vc_dd},\eqref{eq:lipschitz_Vc}, and~\eqref{eq:bound_dd}, the regularity conditions~\eqref{eq:nabla_vc_d},~\eqref{eq:lipschitz_vc_app}, and~\eqref{eq:slow_dd} are guaranteed.
	As for the remaining condition~\eqref{eq:Vc_minus}, we evaluate $\Delta\Vc(\statet) := \Vc(\statetp) - \Vc(\statet)$ along the trajectories of the reduced system~\eqref{eq:reduced_system}, thus obtaining 
	\begin{align}
		\Delta\Vc(\statet) 
		&= \Vc(\statet + \pr(\ala(\statet) - \statet))
		- \Vc(\statet)
		\notag\\
		&\stackrel{(a)}{\leq}
		\Vc(\statet) + \pr\nabla\Vc(\statet)\T(\ala(\statet) - \statet) 
		+ \pr^2\tfrac{\cff}{2}\norm{\ala(\statet) -\statet}^2 - \Vc(\statet)
		\notag\\
		&\stackrel{(b)}{\leq}
		-\pr c_1\dd(\statet)^2,
	\label{eq:DeltaW_1}
	\end{align}
	where in $(a)$, in light of the Lipschitz continuity of $\nabla\Vc$ (cf.~\eqref{eq:lipschitz_Vc} in Assumption~\ref{ass:centralized_convergence}), we bound $\Vc(\statet + \pr(\ala(\statet) - \statet))$ via Descent Lemma~\cite[Prop.~6.1.2]{bertsekas2015convex}, while in $(b)$ we use~\eqref{eq:minus_Vc} and the fact that $\pr^2 \leq \pr$.
	Inequality~\eqref{eq:DeltaW_1} proves that $\Vc$ satisfies~\eqref{eq:Vc_minus} too.
	Further, Assumptions~\ref{ass:centralized_convergence} and~\ref{ass:tracking} guarantee that the Lipschitz continuity of the dynamics of~\eqref{eq:global_update_decomposed} and the function $\pzeq$ and the invariance conditions asked in~\eqref{eq:invariance} are achieved into $\cSc$ and $\R^{\np}$.
	Thus, we are in the position to apply Theorem~\ref{th:generic}.
	Namely, we use $\Vz$ and $\Vc$ to define $V: \R^{\nstate} \times \R^{\nz} \to \R$ as in~\eqref{eq:V} and
	Theorem~\ref{th:generic} guarantees there exist $\bar{\pr} \in (0,1)$ and $\psi > 0$ such that, by evaluating $\Delta V(\statet,\pzt - \pzeq(\statet)) := V(\statetp,\pztp - \pzeq(\statetp)) - V(\statet,\pzt - \pzeq(\statet))$ along the trajectories of system~\eqref{eq:global_update_decomposed}, for all $\pr \in (0,\bar{\pr})$, it holds
	\begin{align}
		\Delta V(\statet,\pzt  -  \pzeq(\statet))  \leq  
		 -  \psi\big(\dd(\statet)^2  +   \norm{\pzt  -  \pzeq(\statet)}^2\big),
		\label{eq:DeltaV_main_theorem}
	\end{align}
	for all $\iter \in \N$.
	The inequality~\eqref{eq:DeltaV_main_theorem} ensures $\Delta V(\statet,\pzt - \pzeq(\statet)) \leq 0$ for all $\iter\in\N$.
	In detail, the right-hand side of~\eqref{eq:DeltaV_main_theorem} is null when $(\statet,\pzt) \in E \subseteq \cSc \times \R^{\np}$, in which
	\begin{align}\label{eq:E_theorem}
		E  :=   \{(\state,\pz)  \in \cSc  \times \R^{\np}  \mid  \dd(\state)  =  0,\pz   =   \pzeq(\state)\}.
	\end{align}
	Thus, by LaSalle's invariance principle~\cite[Theorem~3.7]{ge2011invariance}, for all $\pr \in (0,\bar{\pr})$, the trajectories of system~\eqref{eq:global_update_decomposed} satisfy %
	\begin{align}\label{eq:limit}
		\lim_{\iter \to\infty}\norm{\col{\statet,
				\pzt}-\xi}_{\cM} = 0,
	\end{align}
	for all $(\state\ud0,\pz\ud0) \in \cSc \times \R^{\np}$, where $\cM \subseteq E$ denotes the largest invariant set for system~\eqref{eq:global_update_decomposed} contained in $E$ (cf.~\eqref{eq:E_theorem}). 
	Since $\statestar \!\in\! \{\state \!\in\! \R^{\nstate} \!\mid\! \dd(\state) \!=\! 0\} \!\implies\! \statestar \!\in\! \cXs$ (cf. Assumption~\ref{ass:centralized_convergence}), we have $\cM \subseteq\{(\state,\pz)\in \R^{\nstate} \times \R^{\np} \mid \state \in \cXs, \pz \!=\! \pzeq(\statestar)\}$.
	In turn, $\statestar \in \cXs \implies \out(\statestar) \in \Xvstar$ by construction (cf. Assumption~\ref{ass:centralized_equilibria}) and, thus, also~\eqref{eq:lasalle_convergence} is achieved.
	As regards the additional results~\eqref{eq:additional}, we use~\eqref{eq:additional_condtions_minus} to further bound~\eqref{eq:DeltaV_main_theorem} as 
	\begin{align}
		\Delta V(\statet,\pzt  -  \pzeq(\statet)) \leq -\psi\big(c\norm{\statet  -  \statestar}^2  +  \norm{\pzt  -  \pzeq(\statet)}^2\big).\label{eq:DeltaV_additional}
	\end{align}
	By using the quadratic bounds~\eqref{eq:additional_condtions_bounds} and~\eqref{eq:Vz_quadratic_distributed}, we get 
	\begin{align}
		\tilde{\underbar{c}}\norm{\begin{bmatrix}\state  - \statestar\\\pz  -  \pzeq(\state)\end{bmatrix}}^2   \leq  V(\state,\pz  -  \pzeq(\state))  \leq  \tilde{\bar{c}}\norm{\begin{bmatrix}\state-\statestar\\\pz  -  \pzeq(\state)\end{bmatrix}}^2,\notag%
	\end{align}
	for all $(\state\ud0,\tz\ud0) \in \cSc\times\R^{\np}$, where $\tilde{\underbar{c}}:=\min\{\underbar{c}, b_1\}$ and $\tilde{\bar{c}}:=\max\{\bar{c},b_2\}$.
	In light of these quadratic bounds and inequality~\eqref{eq:DeltaV_additional}, we invoke~\cite[Th.~13.2]{chellaboina2008nonlinear} to claim that $(\statestar,\pzeq(\statestar))$ is exponentially stable for~\eqref{eq:global_update_decomposed} and, thus, result~\eqref{eq:additional_result_states} is achieved (see the proof of~\cite[Th.~13.2]{chellaboina2008nonlinear} to quantify the constants $r_1$ and $r_2$ in~\eqref{eq:additional_result_states}).
	The proof of~\eqref{eq:additional_result} follows by recalling that $\xstar = \out(\statestar)$ and setting $r_3 \!=\! \lipp_{\out}r_2\norm{\col{\state\ud0 \!-\! \statestar,\pz\ud0\!-\!\pzeq(\state\ud0)}}$.
\end{proof}
We note that the bound $\bar{\pr}$ on the parameter $\pr$ in Theorem~\ref{th:main} is uniform in $(\state\ud0,\z\ud0) \in \cSc \times \cSz$ (see Remarks~\ref{rem:init_chi} and~\ref{rem:init_z} to recall the purpose of these sets).
Hence, using blocks having $\cSc \equiv \R^{\nstate}$ and $\cSz \equiv \R^{\nz}$, Theorem~\ref{th:main} provides global results.

\section{Building Blocks for\\ Distributed Optimization and Games Setups}
\label{sec:overview_of_algorithms}

In this section, we provide implementations of the meta-algorithm~\eqref{eq:centralized_method} and \cnsor protocols~\eqref{eq:consensus_oriented} for the scenarios considered in Section~\ref{sec:problem}.
This section, combined with the systematic design and analysis offered in Section~\ref{sec:systematic_design}, paves the way for designing a large variety of distributed algorithms in the form~\eqref{eq:global_update} for optimization and games.

First, for each setup introduced in Section~\ref{sec:problem}, we provide a \centralized/ method matching Assumptions~\ref{ass:centralized_equilibria} and~\ref{ass:centralized_convergence} (and, some of them, the additional Assumption~\ref{ass:additional_assumption}).
The considered \centralized/ methods involve aggregation functions that can be computed via dynamic average consensus.
Thus, we then provide a set of dynamic average consensus schemes that satisfy Assumptions~\ref{ass:orthogonality},~\ref{ass:equilibria_consensus}, and~\ref{ass:tracking}.

\subsection{\Centralized/ Methods for Optimization and Games}
\label{sec:federated_algorithm}

For each setup considered in Section~\ref{sec:problem}, we now provide a \centralized/ method satisfying Assumptions~\ref{ass:centralized_equilibria} and~\ref{ass:centralized_convergence}.

\subsubsection*{Consensus Optimization} we now present a \centralized/ method for problems in the form~\eqref{eq:consensus_optimization}.

First, we introduce an augmented cost function $\tfc: \R^{Nd} \to \R$ defined as 
\begin{align}
	\tfc(x) \!:=\! \frac{\const}{2} x\T \left(I - \frac{\1\1\T}{N}\right)x + N \sum_{i=1}^N \f_i\bigg(\dfrac{1}{N}\sum_{j=1}^N x_j\bigg),\label{eq:tilde_f}
\end{align}
where $x := \col{x_1,\dots,x_N} \in \R^{Nd}$ and $\const > 0$ can be arbitrarily chosen.
Such an augmented cost allows us to equivalently rewrite problem~\eqref{eq:consensus_optimization} as
\begin{align}
	\min_{(x_1, \dots, x_N) \in \R^{Nd}} \tfc(x),
	\label{eq:consensus_optimization_alternative}
\end{align}
namely, $\xstar \in \R^{d}$ is a stationary point of problem~\eqref{eq:consensus_optimization} if and only if $\1\xstar \in \R^{Nd}$ is a stationary point of~\eqref{eq:consensus_optimization_alternative}.
Hence, by applying the gradient method to problem~\eqref{eq:consensus_optimization_alternative}, each agent $i$, at iteration $\iter$, maintains an estimate $\stateit \in \R^{d}$ about the $i$-th block of a solution to~\eqref{eq:consensus_optimization_alternative} and explicitly updates it according to
\begin{align}
	\stateitp = \stateit - \step \bigg(\const\bigg(\stateit - \dfrac{1}{N}\sum_{j=1}^N \statejt\bigg) + \sum_{j=1}^N \nabla f_j\bigg(\dfrac{1}{N}\sum_{k=1}^N \statekt\bigg)\!\bigg)\!,
	\label{eq:parallel_for_consensus}
\end{align}
where $\step > 0$ is the step size.
Method~\eqref{eq:parallel_for_consensus} can be cast in the form of the meta-algorithm~\eqref{eq:local_centralized_method} by setting
\begin{align*}
	\outi(\statei) &:= \statei
	\\
	\agg(\state) &:= 
	\begin{bmatrix} 
		\agg_1(\state) 
		\\
		\agg_2(\state)
	\end{bmatrix} 
	:= \begin{bmatrix}
		\tfrac{1}{N}\sum_{i=1}^N \statei
		\\[.6em]
		\sum_{i=1}^N \nabla f_i(\tfrac{1}{N}\sum_{j=1}^N \statej)
	\end{bmatrix}
	\\
	\ali(\statei,\agg(\state)) &:= \statei - \step\left(\const(\statei - \agg_1(\state)) + \agg_2(\state)\right)
	.%
\end{align*} 
Given $\statet \!:=\! \col{\statet_1,\dots,\statet_N} \!\in\! \R^{Nd}$, we note that the stacking dynamics arising from the local one~\eqref{eq:parallel_for_consensus} compactly reads as 
\begin{align}
	\statetp = \statet - \step\nabla \tfc(\statet).\label{eq:aggregate_dynamics_for_consensus_optimization}
\end{align}
Further, by assuming that $\sum_{i=1}^N f_i(x)$ is radially unbounded and continuously differentiable with Lipschitz continuous gradients, these properties apply to $\tfc(x)$ too by construction (cf.~\eqref{eq:tilde_f}).
Hence, by using the Descent Lemma~\cite[Prop.~6.1.2]{bertsekas2015convex}, algorithm~\eqref{eq:aggregate_dynamics_for_consensus_optimization} satisfies Assumptions~\ref{ass:centralized_equilibria} and~\ref{ass:centralized_convergence} with sufficiently small $\step$ and by setting
\begin{align*}
	\cSc := \R^{N\n}, 
	\hspace{.2cm}
	\Vc(\state) := \tfc(\state).
\end{align*}
In the case of strong convexity of $\sum_{i=1}^N f_i(x)$, given its unique minimizer $\xstar$, it also satisfies the additional Assumption~\ref{ass:additional_assumption} with 
\begin{align*}
	\Vc(\state) := \tfc(\state) - \tfc(\1\xstar).
\end{align*}
\subsubsection*{Constraint-Coupled Optimization} we now focus on problems in the form~\eqref{eq:constr_coupled}. %
Let $A := \begin{bmatrix}A_1&\hdots&A_N\end{bmatrix}$, $b:=\sum_{i=1}^N b_i$, and $H_\rho: \R \times \R \to \R$ be defined as
\begin{align*}
	&H_\rho(v,\lambda) 
	:= 
		\begin{cases}
			\hspace{-.01cm} v \lambda + \dfrac{\rho}{2}v^2 \hspace{.23cm} \text{if }  \rho v + \lambda \ge 0
			\\
			\hspace{-.01cm}-\dfrac{1}{2\rho} \lambda^2 \hspace{.59cm} \text{if } \rho v + \lambda < 0,
		\end{cases}
\end{align*}
where $\rho > 0$.
Then, let $\cHp: \R^{\m} \times \R^\m \to \R$ be defined as 
\begin{align}
	\cHp(v,\lambda) := \sum_{\ell=1}^\m H_\rho\left([v]_\ell,[\lambda]_\ell\right).\label{eq:cH}
\end{align}
By slightly adapting the approach in~\cite{qu2018exponential}, we use $\cHp$ to introduce an Augmented Lagrangian function $\cL: \R^{\n} \times \R^{Nm} \to \R$ associated to problem~\eqref{eq:constr_coupled} and defined as 
\begin{align*}
	\cL(x,\lambda) &:= \sum_{i=1}^N f_i(x_i) + \cHp\bigg(Ax-b,\dfrac{1}{N}\sum_{i=1}^N \lambda_i\bigg)
	-\frac{\const}{2} \lambda\T \big(I - \tfrac{1}{N}\1\1\T\big)\lambda,%
\end{align*}
in which $\lambda := \col{\lambda_1,\dots,\lambda_N} \in \R^{Nm}$ with $\lambda_i \in \R^{m}$ for all $i \in \set$ and $\const > 0$ can be arbitrarily chosen.
Finding saddle points of $\cL$ allows for solving problem~\eqref{eq:constr_coupled} with linear rate~\cite{qu2018exponential}.
Hence, for all $i \in \set$, agent $i$ maintains a local solution estimate $\xit \in \R^{\n_i}$ and a local multiplier $\lit \in \R^{\m}$ and updates them with a descent and an ascent step based on $\cL$, namely
\begin{subequations}\label{eq:parallel_for_cc}
	\begin{align}
		\xitp &= \xit -\step\nabla f_i(\xit) 
		-\step A_i\T\nabla_1 \cHp\bigg(A\xt - b, \dfrac{1}{N}\sum_{j=1}^N \ljt\bigg)
		\\
		\litp &= \lit + \step\const\bigg(\dfrac{1}{N}\sum_{j=1}^N \ljt -\lit\bigg)
		+ \step\tfrac{1}{N}\nabla_2 \cHp\bigg(A\xt -b, \dfrac{1}{N}\sum_{j=1}^N \ljt\bigg),
	\end{align}
\end{subequations}
where $\step > 0$ is the step size.
We cast method~\eqref{eq:parallel_for_cc} in the meta-algorithm form~\eqref{eq:local_centralized_method} by setting%
\begin{subequations}\label{eq:g_cc}
	\begin{align}
		&\state_i := \begin{bmatrix}x_i\\ \lambda_i\end{bmatrix}, \quad \outi(\statei) := \begin{bmatrix}I_{\n_i}& 0_\m\end{bmatrix}\statei
		\\
		& \agg(\state) := \begin{bmatrix} 
			\agg_1(\state) 
			\\
			\agg_2(\state) 
		\end{bmatrix} := 
		\begin{bmatrix}
			\sum_{i=1}^N (A_ix_i - b_i)
			\\[.5em]
			\tfrac{1}{N}\sum_{i=1}^N \lambda_i
		\end{bmatrix}
		\\
		&\ali(\statei,\agg(\state))  :=  \state_i + \step\begin{bmatrix}
		- \nabla f_i(x_i) - A_i\T\nabla_{1}\cHp(\agg_1(\state),\agg_2(\state))
		\\
		\const(\agg_2(\state) - \lambda_i) + \tfrac{1}{N}\nabla_2 \cHp(\agg_1(\state),\agg_2(\state))\end{bmatrix},
	\end{align}
\end{subequations}
for all $i \in \set$.
The local update~\eqref{eq:parallel_for_cc} allows for a parallel implementation of the augmented primal-dual method proposed in~\cite{qu2018exponential} to deal with generic optimization problems with linear inequality constraints.
Based on this observation, by assuming that $A$ is full-row rank and $\sum_{i=1}^N \f_i(x_i)$ is strongly convex with Lipschitz continuous gradients, for sufficiently small $\step$, the stacking dynamics arising from~\eqref{eq:parallel_for_cc} satisfies Assumptions~\ref{ass:centralized_equilibria},~\ref{ass:centralized_convergence}, and~\ref{ass:additional_assumption} by adapting~\cite[Lemma~5]{qu2018exponential} and setting
\begin{align*}
	\Vc(\state) &:= 
	\begin{bmatrix}
	   \x - \xstar
	   \\
	   \lambda -  \1\lstar
   \end{bmatrix}\T \! \begin{bmatrix}
	\kappa I& A\T\1\T 
	   \\
	   \1A& \kappa I
   \end{bmatrix}\begin{bmatrix}
	   \x - \xstar
	   \\
	   \lambda - \1\lstar
   \end{bmatrix}
    \\
\cSc &:= \R^{\n} \times \R^{N\m}_+,
\end{align*}
where $\xstar \in \R^{\n}$ and $\lstar \in \R^m$ are the unique solution to problem~\eqref{eq:constr_coupled} and the corresponding multiplier, while $\kappa > 0$ needs to be sufficiently large~\cite[Th.~2]{qu2018exponential}.
In the case of nonlinear constraints, we refer to~\cite{tang2020semi} to get semi-global exponential stability properties for the centralized algorithm~\eqref{eq:parallel_for_cc}.

\subsubsection*{Aggregative Optimization} we now report a \centralized/ method tailored for problems in the form~\eqref{eq:aggregative_optimization_problem}.
Given a step size $\step > 0$, agent $i$, at iteration $\iter$, updates an estimate $\stateit \in \R^{\n_i}$ about the $i$-th block of a problem solution via
\begin{align}
	\stateitp &= \stateit- \step \nabla_1\f_i(\stateit,\sigma(\statet))
	-\step \dfrac{\nabla\phii(\stateit)}{N}
	\sum_{j=1}^N\nabla_2\f_j(\statejt,\sigma(\statet)).
	\label{eq:parallel_for_agg}
\end{align}
We cast method~\eqref{eq:parallel_for_agg} in the meta-algorithm form~\eqref{eq:local_centralized_method} by setting%
\begin{align*}
	\outi(\statei) &:= \statei
	\\
	\agg(\state) &:= \begin{bmatrix} 
		\agg_1(\state)
		\\
		\agg_2(\state)
	\end{bmatrix}
	 :=  \begin{bmatrix}
		\tfrac{1}{N}  \sum_{i=1}^N  \phii(\statei)
		\\
		\tfrac{1}{N}  \sum_{i=1}^N  \nabla_2 \f_i(\statei,\tfrac{1}{N}  \sum_{j=1}^N  \phij(\statej))
	\end{bmatrix}
	\\
	\ali(\statei,\agg(\state)) &:= \statei - \step \left( \nabla_1 \f_i(\statei,\agg_1(\state)) + \nabla\phi(\state)\agg_2(\state)\right),
\end{align*}
for all $i \in \set$.
Let $\fs: \R^{\n} \to \R$ be defined as $\fs(x) := \sum_{i=1}^N \f_i(x_i,\sigma(x))$.
Then, by computing the gradient of $\fs$, we note that the stacking update of~\eqref{eq:parallel_for_agg} reads as 
\begin{align}
	\statetp = \statet - \step\nabla \fs(\statet).\label{eq:aggregate_for_aggregative}
\end{align}
Namely, system~\eqref{eq:aggregate_for_aggregative} corresponds to a parallel implementation of the gradient descent method applied to problem~\eqref{eq:aggregative_optimization_problem}. 
Thus, if $\fs$ is radially unbounded with Lipschitz continuous gradients, for sufficiently small $\step$, system~\eqref{eq:aggregate_for_aggregative} satisfies Assumptions~\ref{ass:centralized_equilibria} and~\ref{ass:centralized_convergence} via Descent Lemma~\cite[Prop.~6.1.2]{bertsekas2015convex} and setting 
\begin{align*}
	\cSc := \R^{\n}, \quad \Vc(\state) := \fs(\state).
\end{align*}
In case of strong convexity of $\fs$, given its unique minimizer $\xstar$, Assumption~\ref{ass:additional_assumption} is satisfied too with $\Vc(\state) := \fs(\state) - \fs(\xstar)$.

\subsubsection*{Aggregative Games} we now provide an equilibrium-seeking scheme for games in the form~\eqref{eq:games}.
Inspired by~\cite{qu2018exponential}, the local constrained problem of agent $i$ can be treated by resorting to the Augmented Lagrangian $\cL_i: \R^{\n} \times \R^{Nm} \to \R$ defined as 
\begin{align*}
	\cL_i(x,\lambda) &:= J_i(x_i,\sigma(x)) + \cHp\bigg(Ax-b,\dfrac{1}{N}\sum_{i=1}^N \lambda_i\bigg)
	-\frac{\const}{2} \lambda\T \big(I - \tfrac{1}{N}\1\1\T\big)\lambda,%
\end{align*}
where $\cHp$ is defined in~\eqref{eq:cH} and $\const > 0$ can be arbitrarily chosen.
As already mentioned above, differently from standard Lagrangian functions, this choice allows for devising algorithms with linear rate~\cite{qu2018exponential}.
In particular, at each $\iter \in \N$, agent $i$ maintains an estimate $\xit \in \R^{n_i}$ about the $i$-th block of a v-GNE of~\eqref{eq:games}, a multiplier $\lit \in \R^{\m}$, and updates them via%
\begin{subequations}\label{eq:parallel_for_games}
	\begin{align}
			\xitp &= \xit - \step \Gi(\xit,\sigma(\xt)) 
			- \step A_i\T\nabla_1 \cHp\bigg(A\xt -b,\dfrac{1}{N}\sum_{j=1}^N \ljt\bigg)
			\\
			\litp &=  \lit + \step \const\bigg(\dfrac{1}{N}\sum_{j=1}^N \ljt - \lit\bigg)  
			+ \step \dfrac{1}{N}\nabla_{2} \cHp\bigg(A\xt -b,\dfrac{1}{N}\sum_{j=1}^N \ljt\bigg),
	\end{align}
\end{subequations}
where $\step > 0$ is the step size, while, for all $i \in \set$, $\Gi: \R^{\n_i} \times \R^{d} \to \R^{\n_i}$ reads as
\begin{align*}
	\Gi(x_i,z_i) &:= 
		\nabla_1 \J_i(x_i,z_i) + \dfrac{\nabla \phi_i(x_i)}{N}\nabla_2 \J_i(x_i,z_i).
\end{align*}
We cast method~\eqref{eq:parallel_for_games} in the meta-algorithm form~\eqref{eq:local_centralized_method} by setting%
\begin{align*}
	\statei &
	:= \col{\x_i,\lambda_i}, 
	\quad 
	\outi(\statei) 
	:= 
	\begin{bmatrix}
		I_{\n_i}& 0_\m
	\end{bmatrix}\statei
	\\
	\agg(\state)
	 &:= 
	\begin{bmatrix} 
		\agg_1(\state) 
		\\
		\agg_2(\state) 
		\\
		\agg_3(\state)
	\end{bmatrix} 
	:= 
	\begin{bmatrix}
		\tfrac{1}{N}\sum_{i=1}^N \phii(x_i) 
		\\[.2em]
		\tfrac{1}{N}\sum_{i=1}^N \lambda_i
		\\[.2em]
		\sum_{i=1}^N (A_i x_i - b_i)
	\end{bmatrix}
	\\
	\ali(\statei,\agg(\state)) &:= \statei + \step
	\begin{bmatrix}
		- \Gi(x_i,\agg_1(\state)) - A_i\T \nabla_1\cHp(\agg_3(\state),\agg_2(\state))
		\\
		\const(\agg_2(\state) - \lambda_i)  +  \tfrac{1}{N}\nabla_{2} \cHp(\agg_3(\state),\agg_2(\state))
	\end{bmatrix}
	.
\end{align*}
By adapting~\cite[Lemma~5]{qu2018exponential}, it can be shown that, for sufficiently small $\step$, dynamics~\eqref{eq:parallel_for_games} matches Assumptions~\ref{ass:centralized_equilibria},~\ref{ass:centralized_convergence}, and~\ref{ass:additional_assumption} by assuming that $\col{\Gg_1(x_1,\sigma(x)),\dots,\Gg_N(x_N,\sigma(x))}$ is strongly monotone, the gradients of $J_i$ and $\phi_i$ are Lipschitz continuous for all $i \in \set$, $A$ is full-row rank, and setting  
\begin{align*}
	\Vc(\state) &:= 
	\begin{bmatrix}
		\x - \xstar
		\\
		\lambda -  \1\lstar
	\end{bmatrix}\T \! \begin{bmatrix}
	 \kappa I& A\T\1\T 
		\\
		\1A& \kappa I
	\end{bmatrix}\begin{bmatrix}
		\x - \xstar
		\\
		\lambda - \1\lstar
	\end{bmatrix}
	\\
	\cSc &:= \R^{\n} \times \R^{N\m}_+,
\end{align*}
where $\xstar \!\in\! \R^{\n}$ and $\lstar \!\in\! \R^{m}$ are the unique v-GNE and multiplier of~\eqref{eq:games}, and $\kappa \! > \! 0$ must be large enough~\cite[Th.~2]{qu2018exponential}.

\subsection{Dynamic Average Consensus}
\label{sec:dynamic_average_consensus}

For the \centralized/ algorithms~\eqref{eq:parallel_for_cc} and~\eqref{eq:parallel_for_games},
it holds 
\begin{align}
	\agg(\state) = \dfrac{1}{N}\sum_{i=1}^N \laggi(\statei),\label{eq:dynamic_averaging_consensus}
\end{align}
where each $\laggi: \R^{\nstatei} \!\to\! \R^{\na}$ gives agent $i$ contribution.
Namely, $\agg(\state)$ is the solution to dynamic average consensus problems (see the tutorial~\cite{kia2019tutorial}).
In the \centralized/ schemes~\eqref{eq:parallel_for_consensus} and~\eqref{eq:parallel_for_agg}, $\agg(\state)$ is the solution to dynamic average consensus problems with composite functions, namely
\begin{align}\label{eq:nested_agg}
	\hspace{-.1cm}\agg(\state) \! :=\!\! \begin{bmatrix}
		\aggI(\state)
		\\
		\aggE(\state,\1\aggI(\state))
	\end{bmatrix} \!\! :=\! \frac{1}{N}\begin{bmatrix}
		\sum_{i=1}^N \laggIi(\state_i) 
		\\
		\sum_{i=1}^N \laggEi(\state_i,\aggI(\state)) 
	\end{bmatrix}\!\!.\!\!
\end{align}
In detail, we split $\agg$ into $\aggI: \R^{\nstate} \to \R^{\naI}$ and $\aggE: \R^{\nstate} \times \R^{N\naI} \to \R^{\naE}$ which, in turn, depend on the local contributions encoded in each $\laggIi: \R^{\nstatei} \to \R^{\naI}$ and $\laggEi: \R^{\nstatei} \times \R^{\naI} \to \R^{\naE}$. 
This ``composite'' structure makes a distinction between the ``internal'' aggregation function $\aggI$ (e.g., the mean of $\state_1$, $\dots, \state_N$) and the ``external'' one $\aggE$
(e.g., a global gradient computed at the mean $\aggI$).
In Appendix~\ref{sec:nested}, we formally show (cf. Proposition~\ref{prop:nested_trackers}) that a cascade structure of suitable dynamic average consensus schemes constitutes a \cnsor dynamics satisfying Assumptions~\ref{ass:orthogonality},~\ref{ass:equilibria_consensus}, and~\ref{ass:tracking} in the case of composite functions as in~\eqref{eq:nested_agg}.
For this reason, we now present a set of dynamic average consensus schemes satisfying Assumptions~\ref{ass:orthogonality},~\ref{ass:equilibria_consensus}, and~\ref{ass:tracking} and, thus, representing possible candidates to constitute the \cnsor part~\eqref{eq:global_update_z} of the overall distributed algorithm.
Nonetheless, we recall that our approach is not limited to dynamic average consensus.
As long as Assumptions~\ref{ass:orthogonality},~\ref{ass:equilibria_consensus}, and~\ref{ass:tracking} are verified, if required by the definition of the specific $\agg$, our approach may also include different consensus protocols, e.g., the leader-follower schemes widely used in game theory~\cite{gadjov2018passivity}.
The schemes are presented by introducing $u_i \!:=\! \laggi(\statei)$, $u_{\cN_i} \!:=\! \col{u_j}_{j \in \cN_i} \!\in\! \R^{\degi\na}$ for all $i \!\in\! \set$,
$u := \col{u_1,\dots,u_N} \in  \R^{N\na}$, and a weighted adjacency matrix $\cW \!\in\! \R^{N \times N}$ whose entries $w_{ij}$ match the sparsity of $\cG$. 

\subsubsection*{Perturbed Consensus Dynamics} this scheme~\cite{zhu2010discrete,kia2019tutorial} has been extensively used in distributed optimization~\cite{nedic2017achieving,daneshmand2020second,li2021distributed,li2021distributedOnline,carnevale2022distributed} and games~\cite{belgioioso2022distributed,carnevale2022tracking}.
In the so-called \emph{causal} form (see~\cite{bin2019system,carnevale2022nonconvex}), agent $i$ sets the approximation function as $\taggi(u_i,\zit) = u_i + \zit$ with $\zit \in \R^{\na}$ updated according to
\begin{align}
	\label{eq:perturbed_consensus_dynamics}
	\zitp &= \sum_{j \in \cN_i} w_{ij}\left(\zjt + u_j\right) - u_i.
\end{align}
System~\eqref{eq:perturbed_consensus_dynamics} matches Assumptions~\ref{ass:orthogonality},~\ref{ass:equilibria_consensus}, and~\ref{ass:tracking} when $\cG$ is strongly connected and $\cW$ is doubly-stochastic~\cite[Th.5]{kia2019tutorial}.

\subsubsection*{Proportional Integral Action for Dynamic Average Consensus} the distributed method~\eqref{eq:perturbed_consensus_dynamics} requires a specific initialization into $\left\{\z \in \R^{N\na} \mid \1\T\z = 0\right\}$~\cite{kia2019tutorial}.
To overcome this limitation, one can discretize the continuous-time scheme in~\cite{freeman2006stability} obtaining
\begin{subequations}\label{eq:perturbed_consensus_dynamics_with_integral_action}
	\begin{align}
		\pitp &= (1-\gamma)\pit - k_P\sum_{j \in \cN_i}w_{ij}(\pit - \pjt) 
		+ k_I\sum_{j \in \cN_i}w_{ij}(\qit - \qjt) + \gamma u_i
		\\
		\qitp &= \qit + k_I\sum_{j \in \cN_i}w_{ij}(\pit - \pjt),
	\end{align}
\end{subequations}
where $\pit, \qit \in \R^{\na}$ and $\gamma, k_P, k_I > 0$ are tuning parameters.
In this scheme, the approximation function is $\taggi(u_i,\col{\pit,\qit}) = \pit$.
If $\cG$ is strongly connected and $\cW$ is doubly-stochastic, the result~\cite[Th.~5]{freeman2006stability}, adapted to discrete-time, ensures that~\eqref{eq:perturbed_consensus_dynamics_with_integral_action} matches Assumptions~\ref{ass:orthogonality},~\ref{ass:equilibria_consensus}, and~\ref{ass:tracking} with suitable $k_P$, $k_I$, $\gamma$.  %

\subsubsection*{R-ADMM for Dynamic Average Consensus} here, we report the distributed method proposed in~\cite{bastianello2022admm} based on Alternating Dual Multiplier Method (ADMM).
In~\cite{carnevale2023admm,carnevale2023distributed,sebastian2024accelerated}, this method has been combined with a gradient-based policy to address consensus optimization.
In detail, the original dynamic consensus problem is turned into the quadratic program
\begin{align}\label{eq:cns_as_opt}
	\begin{split}
		\min_{
				(\s_1,\dots,\s_N) \in \R^{N\na}
		}
		&\sum_{i=1}^N \dfrac{1}{2}\norm{\s_i - u_i}^2
		\\
		\text{s.t.:} \: \: &\s_i = \s_j,  \: \forall (i,j)\in \cE.
	\end{split}
\end{align}
If graph $\cG$ is connected, the (unique) optimal solution $\s\ud\star \in \R^{N\na}$ to~\eqref{eq:cns_as_opt} reads as $\s\ud\star = \1\tfrac{1}{N}\sum_{j=1}^N u_j$~\cite{bastianello2022admm}. 
From this observation, the dynamic average consensus is achieved by addressing~\eqref{eq:cns_as_opt} via Distributed R-ADMM~\cite{bastianello2020asynchronous}.
Namely, agent $i$ maintains a variable $\zijt \in \R^{\na}$ for each neighbor $j \in \cN_i$ and implements%
	\begin{align*}
		\taggi(u_i,\zit)
		&= \argmin_{\s_i \in \R^\na}
		\bigg\{\tfrac{1}{2}\norm{\s_i - u_i}^2 \! - \! \s_i\T \sum_{j \in \cN_i}\zijt 
		\! + \! \tfrac{\rho \degi}{2}\norm{\s_i}^2\bigg\}
		\\
		\zijtp &= (1 - \beta)\zijt + \beta\left(-\zjit + 2\rho\taggj(u_j,\zjt)\right),
	\end{align*}
with $\rho > 0$ and $\beta \in (0,1)$ and $\zit := \col{\zijt}_{j \in \cN_i}$.
Since the cost $\tfrac{1}{2}\norm{\s_i - u_i}^2$ is quadratic, the above update reduces to
\begin{subequations}\label{eq:ADMM_update}
	\begin{align}
		\taggi(u_i,\zit) &= \dfrac{1}{1+\rho \degi}\bigg(u_i + \sum_{j\in\cN_i} \zijt\bigg)
		\\
		\zijtp &= (1 - \beta)\zijt + \beta\left(-\zjit + 2\rho \taggj(u_j,\zjt)\right).
	\end{align}
\end{subequations}
If graph $\cG$ is undirected and connected, system~\eqref{eq:ADMM_update} matches Assumptions~\ref{ass:orthogonality},~\ref{ass:equilibria_consensus}, and~\ref{ass:tracking} (see~\cite[Prop.~5]{bastianello2022admm} or~\cite[Lemma~IV.3]{carnevale2023admm}).

\section{Novel Distributed Algorithm \\for Constraint-Coupled Optimization}
\label{sec:application}

In this section, we show the potential of our systematic design by synthesizing a novel distributed algorithm for constraint-coupled problems~\eqref{eq:constr_coupled}.
Then, we provide numerical simulations corroborating the theoretical results.
With this example, we effectively demonstrate the potential of the proposed procedure as it immediately offers both the design and analysis of a distributed algorithm able to recover the recent result in~\cite{qu2018exponential} about (centralized) optimization with inequality constraints.

\subsection{Distributed Algorithm Design}
\label{sec:application_design}

As claimed in Section~\ref{sec:federated_algorithm}, the constraint-coupled setup~\eqref{eq:constr_coupled} can be addressed through the \centralized/ method~\eqref{eq:parallel_for_cc}.
The execution of the considered \centralized/ algorithm needs the aggregation function $\agg(\state)$ specified in~\eqref{eq:g_cc}.
Once the \centralized/ algorithm is chosen and its aggregation function has been identified, we follow the systematic design outlined in Section~\ref{sec:systematic_design} to get its distributed counterpart.
Specifically, for each agent $i \in \set$, we introduce the auxiliary variables $\zit \in \R^{2\m}$ and choose the perturbed consensus dynamics in the causal form~\eqref{eq:perturbed_consensus_dynamics} to determine their evolution.
Then, by decomposing each $\zit$ according to $\zit := \col{\wit,\zetait}$ with $\wit,\zetait \in \R^{\m}$, the \centralized/ method~\eqref{eq:parallel_for_cc} gives rise to the distributed one 
\begin{subequations}\label{eq:new_distributed_algorithm}
\begin{align}
	\xitp &= \xit - \pr\step\nabla f_i(\xit) 
	- \pr\step A_i\T\nabla_1\cHp\left(N(A_i\xit - b_i) + \zetait, \lit + \wit\right)
	\\
	\litp &= \lit  +  \pr\step\const\wit  
	+   \pr\step\tfrac{1}{N}\nabla_2 \cHp\left(N(A_i\xit  -  b_i)  +  \zetait, \lit  +  \wit\right)
	\\
	\witp&= \sum_{j \in \cN_i} w_{ij}\left(\wjt + \ljt\right)
	- \lit
	\\
	\zetaitp &=\sum_{j \in \cN_i} w_{ij}\left(\zetajt  +  N(A_j\xjt  -  b_j)\right)
		\! - \!  N(A_i\xit  \! - \!  b_i).\!
\end{align}
\end{subequations}
Next, we ensure that~\eqref{eq:new_distributed_algorithm} solves problem~\eqref{eq:constr_coupled} with a linear rate.
As far as we know, existing distributed schemes (see, e.g.,~\cite{li2022implicit}) achieve this property only in the case of equality constraints and, thus, this is the first distributed scheme with a linear rate in problems with coupling inequality constraints.
\begin{theorem}\label{th:new_algorithm}
	Consider~\eqref{eq:new_distributed_algorithm}.
	Assume that $\sum_{i=1}^N f_i$ is $\mu$-strongly convex, $\nabla f_i$ is $\lipp$-Lipschitz continuous for all $i \in \set$, and $\kappa_1 I_{\m} \leq AA\T \leq \kappa_2 I_{\m}$ for some $\mu, \lipp, \kappa_1, \kappa_2 > 0$.
	Assume that $\cG$ is strongly connected and $\cW$ doubly stochastic.
	Then, there exist $\bar{\pr}, \bar{\step}, a_1, a_2 > 0$ such that, for all $(\x_i\ud0,\lambda_i\ud0$,$\w_i\ud0$,$\z_i\ud0) \in \R^{\n_i} \times \R^{\m} \times \R^{\m} \times \R^\m$ with $\w_i\ud0 = \z_i\ud0 = 0$ for all $i \in \set$, $\pr \in (0,\bar{\pr})$, and $\step \in (0,\bar{\step})$, it holds 
	\begin{align}
		\norm{\xt - \xstar} \leq a_1\exp(-a_2\iter),\label{eq:convergence_new_algorithm}
	\end{align}
	where $\xstar$ is the unique solution to problem~\eqref{eq:constr_coupled}.
\end{theorem}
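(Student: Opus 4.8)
The plan is to recognize algorithm~\eqref{eq:new_distributed_algorithm} as an instance of the distributed meta-algorithm~\eqref{eq:global_update} and then apply Theorem~\ref{th:main}. First I would set $\statei := \col{\x_i,\lambda_i}$ and $\z_i := \col{\w_i,\zeta_i}$, take $\al_i$ and $\out_i$ exactly as in~\eqref{eq:g_cc}, and build the local proxy from the causal perturbed-consensus form as $\taggi(\statei,\z_i) := u_i + \z_i$ with $u_i := \laggi(\statei) := \col{N(A_i\x_i - b_i),\,\lambda_i}$, so that $\agg(\state) = \tfrac1N\sum_{i=1}^N\laggi(\statei)$ matches~\eqref{eq:g_cc} as a (vector) dynamic average consensus in the sense of~\eqref{eq:dynamic_averaging_consensus}, and take $\tr_i$ as in~\eqref{eq:perturbed_consensus_dynamics}. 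Substituting these maps into~\eqref{eq:local_update}, with the outer step $\pr$ of~\eqref{eq:local_update_wi} multiplying the inner increment (which already carries the step $\step$ of~\eqref{eq:parallel_for_cc}), reproduces~\eqref{eq:new_distributed_algorithm} line by line; in particular the effective step on $(\x_i,\lambda_i)$ is $\pr\step$. Here $\agg$ carries no composite/nested structure, so a single-layer consensus-oriented scheme suffices and Proposition~\ref{prop:nested_trackers} is not invoked.

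\textbf{Centralized side.} Next I would verify the centralized-method hypotheses for the stacked dynamics~\eqref{eq:parallel_for_cc}, i.e., the parallel version of the augmented primal–dual method of~\cite{qu2018exponential}. Under $\mu$-strong convexity of $\sum_{i=1}^N\f_i$, $\lipp$-Lipschitz continuity of each $\nabla\f_i$, and $\kappa_1 I \le AA\T \le \kappa_2 I$ (equivalently, $A$ full row rank), problem~\eqref{eq:constr_coupled} admits a unique primal–dual pair $(\xstar,\lstar)$. As discussed in Section~\ref{sec:federated_algorithm}, choosing $\Vc$ the quadratic form centered at $(\xstar,\1\lstar)$ (with its free parameter large enough), $\dd(\state):=\sqrt{c}\,\norm{\state-\statestar}$, and $\cSc$ as therein, one obtains the descent/regularity inequalities~\eqref{eq:Vc} together with~\eqref{eq:additional_condtions} whenever $\step$ is below a threshold $\bar{\step}$, by adapting~\cite[Lemma~5, Th.~2]{qu2018exponential} to this discrete-time, parallelized update; since $\cHp$ is $C^1$ with globally Lipschitz gradient, $\al$ and $\out$ are Lipschitz on $\cSc\times\R^{N\na}$ and $\cSc$, and since $\Xvstar=\{\xstar\}$ with $\dd(\state)=0 \Leftrightarrow \state=\statestar$, the equilibrium characterizations in Assumptions~\ref{ass:centralized_equilibria} and~\ref{ass:centralized_convergence} hold. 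Hence Assumptions~\ref{ass:centralized_equilibria},~\ref{ass:centralized_convergence}, and~\ref{ass:additional_assumption} are in force.

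\textbf{Consensus side and conclusion.} Then I would discharge the consensus-side hypotheses. With $\cG$ strongly connected and $\cW$ doubly stochastic, the causal perturbed consensus dynamics~\eqref{eq:perturbed_consensus_dynamics} admits a coordinate change~\eqref{eq:change_of_variables} splitting $\z$ into an average component confined to the forward-invariant set $\cSbz=\{0\}$ — entered because the initialization $\w_i\ud0=\z_i\ud0=0$ gives $\z\ud0\in\cSz=\{\z\mid\1\T\z=0\}$ — and a disagreement component $\pz$ for which Assumptions~\ref{ass:orthogonality},~\ref{ass:equilibria_consensus}, and~\ref{ass:tracking} hold, by~\cite[Th.~5]{kia2019tutorial}; in particular the equilibrium $\pzeq(\state)$ is globally exponentially stable for the associated boundary-layer system, uniformly in $\state$. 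With Assumptions~\ref{ass:centralized_equilibria}--\ref{ass:tracking} satisfied, Theorem~\ref{th:main} provides $\bar{\pr}\in(0,1)$ such that, for all $\pr\in(0,\bar{\pr})$ and all admissible initializations, bound~\eqref{eq:additional_result} holds; setting $a_2:=r_1$ and $a_1:=r_3$ (as in Theorem~\ref{th:main}, with $r_3$ depending on the initial condition) yields~\eqref{eq:convergence_new_algorithm}.

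\textbf{Main obstacle.} The delicate step is the centralized one: converting the convergence result of~\cite{qu2018exponential} into the precise LaSalle/Lyapunov inequalities~\eqref{eq:minus_Vc}--\eqref{eq:bound_dd} and~\eqref{eq:additional_condtions} for the discrete-time parallel scheme~\eqref{eq:parallel_for_cc}. The nonsmoothness of $\cHp$ (piecewise quadratic through $H_\rho$) forces a branch-by-branch analysis when bounding $\nabla\Vc(\state)\T(\al(\state,\1\agg(\state))-\state)$ together with the quadratic remainder, and one must jointly control how large the free parameter in $\Vc$ and how small $\step$ must be so as to close~\eqref{eq:minus_Vc} and~\eqref{eq:bound_dd} simultaneously; the forward invariance of $\cSc$ under $\al$ must also be checked against these updates. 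Once this is done, the change of variables and the singular-perturbation interconnection are already packaged in Theorem~\ref{th:main} and in the cited consensus results, so no further work is needed.
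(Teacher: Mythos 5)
Your proposal is correct and follows essentially the same route as the paper: cast~\eqref{eq:new_distributed_algorithm} as an instance of~\eqref{eq:global_update} with the centralized block~\eqref{eq:parallel_for_cc} (verified via~\cite[Lemma~5]{qu2018exponential}) and the causal perturbed consensus dynamics~\eqref{eq:perturbed_consensus_dynamics} (verified via~\cite[Th.~5]{kia2019tutorial}), then invoke Theorem~\ref{th:main} and read off the rate from~\eqref{eq:additional_result}. Your additional detail on the explicit identification of $\laggi$, the proxy $\taggi$, the zero initialization placing $\z\ud0$ in $\cSz$, and the delicacy of adapting the Qu--Li Lyapunov argument to the piecewise-quadratic $\cHp$ is a faithful elaboration of steps the paper only asserts.
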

\begin{proof}
	The proof directly uses Theorem~\ref{th:main}.
	Indeed, under the enforced assumptions, there exists $\bar{\step} > 0$ such that, for all $\step \in (0,\bar{\step})$, the inspiring \centralized/ scheme~\eqref{eq:parallel_for_cc} satisfies Assumptions~\ref{ass:centralized_equilibria},~\ref{ass:centralized_convergence}, and~\ref{ass:additional_assumption} by adapting~\cite[Lemma~5]{qu2018exponential}.
	Further, the perturbed consensus dynamics (i.e.,~\eqref{eq:perturbed_consensus_dynamics}) of $\col{\wit,\zetait}$ matches Assumptions~\ref{ass:orthogonality},~\ref{ass:equilibria_consensus}, and~\ref{ass:tracking}~\cite[Th.~5]{kia2019tutorial}.
	Thus, the requirements of Theorem~\ref{th:main} are met and~\eqref{eq:convergence_new_algorithm} follows by~\eqref{eq:additional_result}.
\end{proof}

\subsection{Numerical Simulations}
\label{sec:application_numerical_simulations}

We consider an instance of problem~\eqref{eq:constr_coupled} with $N = 10$, $\m \!=\! 2$, $\n_i \!=\! 2$ for all $i \!\in\! \set$, and quadratic costs $f_i: \R^2 \!\to\! \R$ defined as
\begin{align*}
	f_i(x_i) = \dfrac{1}{2}x_i\T Q_ix_i + r_i\T x_i,
\end{align*}
where $Q_i = Q_i\T \in \R^{2 \times 2}$ and $r_i \in \R^{2}$.
For all $i \in \set$, we generate random matrices $Q_i$ with eigenvalues in the set $(0,1)$. 
We randomly select the components of $r_i$ according to a Gaussian distribution with zero mean and variance $1$ and the components of $b_i$ extracting them from the interval $(0,1)$ with a uniform probability distribution.
Besides, we ensure that the matrices $A_i$ guarantee the full-row rankness of $A = \begin{bmatrix}
	A_1& \hdots& A_N
\end{bmatrix}$.
As for the inter-agent communication, we randomly generate an \er/ graph with connectivity parameter $0.3$.
As for the algorithm parameters, we empirically tune them as $\pr = 0.1$, $\step = 0.1$, $\rho = 0.9$, and $\const = 1$.
Fig.~\ref{fig:error} shows the evolution of both $\norm{\statet - \statestar}$ and $\norm{\Tp(\zt) - \pzeq(\statet)}$, where $\statestar \in \R^{\n + N\m}$ contains both the unique problem solution $\xstar$ and the associated multiplier $\lstar$%
, the map $\Tp: \R^{2N\m} \to \R^{2(N-1)\m}$ (see Assumption~\ref{ass:orthogonality}) reads as $\Tp(\z) = T_\perp\z$, where $T_\perp \in \R^{2(N-1)\m \times 2N\m}$ is a matrix whose columns span the subspace orthogonal to the span of $\1_{N,2\m}/\sqrt{N}$, while $\pzeq(\state):=-T_\perp\col{\lambda_1,N(A_1\x_1-b_1),\dots,\lambda_N,N(A_N\x_N-b_N)} \in \R^{2(N-1)\m}$.
For further details about these elements, we point to the reference~\cite{kia2019tutorial}.
As predicted by Theorem~\ref{th:new_algorithm}, Fig.~\ref{fig:error} confirms that~\eqref{eq:new_distributed_algorithm} exhibits exponential convergence.
Fig.~\ref{fig:error} also graphically highlights the different convergence rates of the slow and fast parts of~\eqref{eq:new_distributed_algorithm}.
\begin{figure}[htpb]
	\centering
	\includegraphics[scale=1]{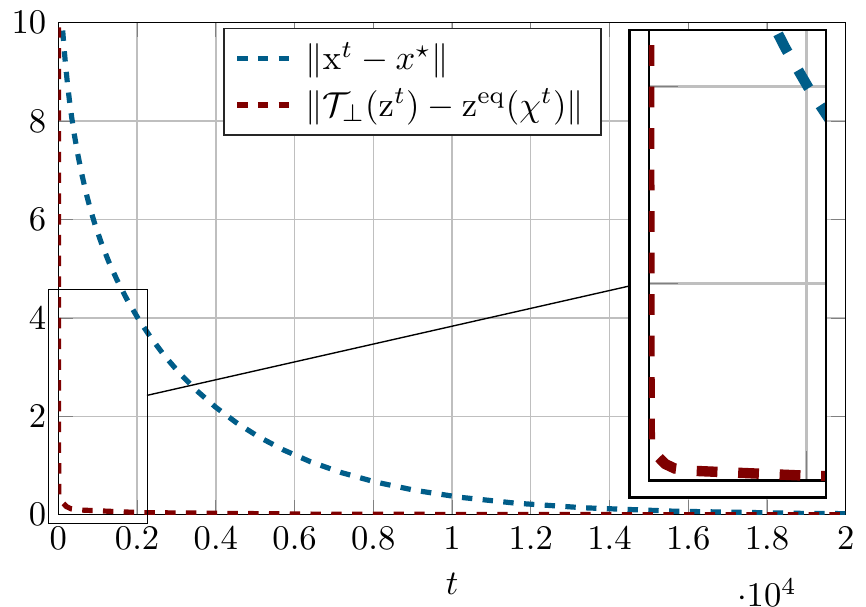}
	\caption{Algorithm~\eqref{eq:new_distributed_algorithm}: evolution over iterations $\iter$ of the quantities $\norm{\statet - \statestar}$ and $\norm{\Tp(\zt) - \pzeq(\statet)}$.}
	\label{fig:error}
\end{figure}

We now give insights on the role of the small parameter $\pr$.
We consider the same setting of the previous simulation and run the centralized method~\eqref{eq:parallel_for_cc} with $\step = 0.1$, $\rho = 0.9$, and $\const = 1$ as well as its distributed counterpart~\eqref{eq:new_distributed_algorithm} with $\step \!=\! 0.1$, $\rho \!=\! 0.9$, $\const = 1$, and different values of $\pr$.
Fig.~\ref{fig:comparison} shows the evolution of the optimality error $\norm{\xt - \xstar}$ in the described cases.
This simulation confirms the validity of our singular-perturbations-based interpretation, as it shows that the distributed scheme~\eqref{eq:new_distributed_algorithm} regains effectiveness only with sufficiently small $\pr$.
We note that by running~\eqref{eq:new_distributed_algorithm} with $\pr =1$ and, thus, with the same step size $\step$ making the centralized scheme~\eqref{eq:parallel_for_cc} effective, we obtain an unstable behavior.
Thus, this test suggests that the necessity to reduce the speed (i.e., to include a sufficiently small $\pr$) is a fundamental limitation of the feedback interconnection architecture required to mimic a centralized scheme in a distributed setting.
\begin{figure}[htpb]
	\centering
	\includegraphics[scale=1]{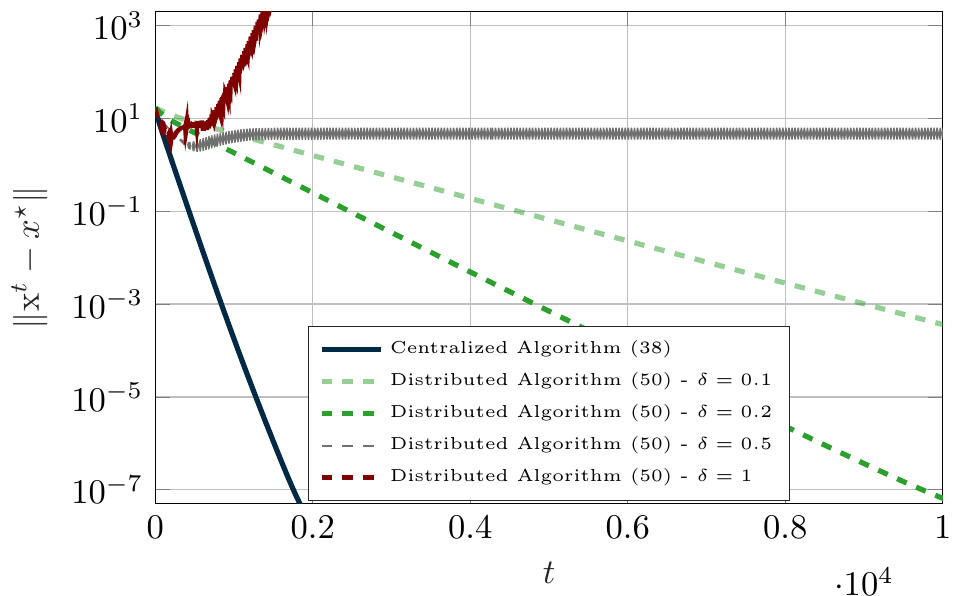}
	\caption{Comparison between the centralized method~\eqref{eq:parallel_for_cc} and its distributed counterpart with different values of $\pr$.}
	\label{fig:comparison}
\end{figure}

\section{Conclusions}
\label{sec:conclusions}

We developed a novel systematic approach for designing and analyzing distributed algorithms for optimization and games over networks.
The proposed approach starts from a \centralized/ optimization-oriented method, identifies an unavailable aggregation function $\agg(\state)$ needed for its execution, and selects a \cnsor dynamics to locally reconstruct it in a distributed way.
By considering the optimization- and \cnsor parts as building blocks, our method prescribes how to interlace them and establishes a set of conditions regarding the two schemes separately considered.
We have shown that these conditions allow for claiming the stability of the obtained distributed algorithm by leveraging on singular perturbations arguments.
Finally, following the proposed approach, we developed a novel distributed scheme for constraint-coupled problems and proved its linear convergence properties.

\appendix

\setcounter{section}{0}
\renewcommand{\thesection}{\Alph{subsection}}

\subsection{A LaSalle's Invariance Principle for SP Systems}
\label{sec:SP}

Here, we provide a result for a special class of SP system in which the slow subsystem has LaSalle convergence properties.
It slightly extends~\cite[Th.~1]{carnevale2022nonconvex} by allowing assumptions to hold in generic sets $\cSc$ and $\cSze$ instead in the entire $\R^{n}$ and $\R^{m}$.
\begin{theorem}\label{th:generic}
	Let $\statet \in \R^{n}$, $\zetat \in \R^m$ and consider the system
	\begin{subequations}\label{eq:interconnected_system}
		\begin{align}
			\statetp &= \statet + \pr\slow(\statet,\zetat)\label{eq:slow_system}
			\\
			\zetatp &= \fast(\zetat,\statet),\label{eq:fast_system}
		\end{align}
	\end{subequations}	
	with $\slow: \R^n \times \R^m \to \R^n$, $\fast: \R^m \times \R^n \to \R^m$, and $\pr > 0$.
	Assume that $\slow$ and $\fast$ are Lipschitz continuous with parameters $\lipp_\slow, \lipp_\fast > 0$, respectively.
	Assume there exist forward-invariant sets $\cSc \subseteq \R^n$ and $\cSze \subseteq \R^{m}$ for~\eqref{eq:slow_system} and~\eqref{eq:fast_system}, namely 
	\begin{subequations}\label{eq:invariance}
		\begin{align}
			\state + \pr \slow(\state,\zeta) \in \cSc \quad \text{for all } \state\in \cSc, \zeta \in \cSze
			\\
			\fast(\zeta,\state) \in \cSze \quad \text{for all } \state \in \cSc, \zeta \in \cSze.
		\end{align}
	\end{subequations} 
	Further, assume that there exists $\zetaeq: \R^n \to \cSze$ such that $\fast(\zetaeq(\state),\state) = \zetaeq(\state)$ for all $\state \in \cSc$ and that $\zetaeq$ is Lipschitz continuous with parameter $\lippeq> 0$.
	Let
	\begin{align}\label{eq:reduced_system_generic}
		\statetp = \statet + \pr \slow(\statet,\zetaeq(\statet))
	\end{align}
	be the reduced system and, given an arbitrary $\state \in \R^{n}$, let
	\begin{align}\label{eq:boundary_layer_system_generic}
		\tzetatp = \fast(\tzetat + \zetaeq(\state),\state) - \zetaeq(\state)
	\end{align}
	be the boundary layer system with $\tzetat \in \R^m$. 
	Assume there exist $\Vzeta: \R^{m} \to \R$ and $b_1, b_2, b_3, b_4 > 0$ such that%
	\begin{subequations}\label{eq:Vz}
		\begin{align}
			&b_1\|\tzeta\|^2 \leq \Vzeta(\tzeta) \leq b_2\|\tzeta\|^2\label{eq:Vz_quadratic_bound}
			\\
			&\Vzeta(\fast(\tzeta \! + \! \zetaeq(\state),\state) \! - \! \zetaeq(\state)) \! - \! \Vzeta(\tzeta) \leq - b_3\|\tzeta\|^2 \label{eq:Vz_minus_theorem}
			\\
			&|\Vzeta(\tzeta) - \Vzeta(\tzeta^\prime)| \leq b_4\|\tzeta - \tzeta^\prime\|(\|\tzeta\| + \|\tzeta^\prime\|),\label{eq:Vz_bound_theorem}
		\end{align}
	\end{subequations}
	for all $\state \in \cSc$, $(\tzeta + \zetaeq(\state)) \in \cSze$, and $(\tzeta^\prime + \zetaeq(\state)) \in \cSze$.
	Further, assume there exist $\bar{\pr}_1, c_1, \cf, \cff, \cfff \! > \! 0$, a radially unbounded and continuously differentiable function $\Vc:$ $\R^n \!\to\! \R$, and a function $\dd:\R^{n} \!\to\! \R_+$ such that, for all $\pr \!\in\! (0,\bar{\pr}_1)$, it holds%
	\begin{subequations}\label{eq:Vc_theorem}
		\begin{align}
			\Vc(\state +\pr \slow(\state,\zetaeq(\state))) \! - \! \Vc(\state) & \leq -  \pr c_1\dd(\state)^2\!\!\!\label{eq:Vc_minus}
			\\
			\norm{\nabla \Vc(\state)} &\leq \cf\dd(\state)
			\label{eq:nabla_vc_d}
			\\
			\norm{\nabla \Vc(\state) - \nabla\Vc(\state^\prime)} &\leq \cff\norm{\state - \state^\prime}
			\label{eq:lipschitz_vc_app}
			\\
			\norm{\slow(\state,\zetaeq(\state))} &\leq \cfff\dd(\state),\label{eq:slow_dd}
		\end{align}
	\end{subequations}
	for all $\state, \state^\prime \in \cSc$. %
	Then, there exist $\bar{\pr} \in (0,\bar{\pr}_1)$, $\psi > 0$ such that, for any $\pr \in (0,\bar{\pr})$, the trajectories of~\eqref{eq:interconnected_system} satisfy
	\begin{align}
		&V(\statetp,\zetatp - \zetaeq(\statetp)) 
		- V(\statet,\zetat - \zetaeq(\statet)) 
		\leq 
		-\psi\left(\dd(\statet)^2 + \norm{\zetat - \zetaeq(\statet)}^2\right),
		\label{eq:result_generic}%
	\end{align}
	for all $(\state\ud0,\zeta\ud0) \!\in\! \cSc \!\times\!\cSze$, and $\iter \!\in\! \N$, where $V\!\!: \R^{n} \!\times\! \R^{m} \!\to\! \R$ is
	\begin{align} 
		V(\state,\tzeta) := \Vz(\tzeta) + \Vc(\state).\label{eq:V}
	\end{align}
\end{theorem}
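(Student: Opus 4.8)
The plan is to establish the composite-Lyapunov decrease~\eqref{eq:result_generic} by direct computation, following the line of argument of~\cite[Th.~1]{carnevale2022nonconvex} but carrying every estimate along the trajectory of~\eqref{eq:interconnected_system}. Since~\eqref{eq:invariance} guarantees $(\statet,\zetat)\in\cSc\times\cSze$ for every $\iter\in\N$ whenever $(\state\ud0,\zeta\ud0)\in\cSc\times\cSze$, and since $\tzetat+\zetaeq(\statet)=\zetat\in\cSze$, all the hypotheses~\eqref{eq:Vc_theorem} and~\eqref{eq:Vz} may be invoked at the relevant iterates. I would work throughout in the error coordinate $\tzetat:=\zetat-\zetaeq(\statet)$ and split $\Delta V:=V(\statetp,\zetatp-\zetaeq(\statetp))-V(\statet,\zetat-\zetaeq(\statet))$ as $\Delta\Vc+\Delta\Vz$, with $\Delta\Vc:=\Vc(\statetp)-\Vc(\statet)$ and $\Delta\Vz:=\Vz(\tzetatp)-\Vz(\tzetat)$, bounding each summand separately.

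For the slow part, I would write $\statetp=\statet+\pr\slow(\statet,\zetaeq(\statet))+\pr\big(\slow(\statet,\zetat)-\slow(\statet,\zetaeq(\statet))\big)$, note that the perturbation has norm at most $\pr\lipp_\slow\norm{\tzetat}$ by Lipschitz continuity of $\slow$, and apply the Descent Lemma~\cite[Prop.~6.1.2]{bertsekas2015convex} (legitimate thanks to~\eqref{eq:lipschitz_vc_app}). Using~\eqref{eq:Vc_minus} for the unperturbed increment, the bound~\eqref{eq:nabla_vc_d} together with~\eqref{eq:lipschitz_vc_app} and~\eqref{eq:slow_dd} for the first-order term, and~\eqref{eq:slow_dd} again for the second-order term, this yields $\Delta\Vc\leq-\pr c_1\dd(\statet)^2+\pr\,k_1\,\dd(\statet)\norm{\tzetat}+\pr^2 k_2\norm{\tzetat}^2$ for constants $k_1,k_2$ depending only on $\cf,\cff,\cfff,\lipp_\slow$.

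For the fast part, the crucial identity is $\tzetatp=\big(\fast(\tzetat+\zetaeq(\statet),\statet)-\zetaeq(\statet)\big)+\big(\zetaeq(\statet)-\zetaeq(\statetp)\big)$: the first bracket is exactly one step of the boundary-layer map~\eqref{eq:boundary_layer_system_generic} at the frozen parameter $\statet$, while the second bracket is bounded in norm by $\pr\lippeq\norm{\slow(\statet,\zetat)}\leq\pr\lippeq\big(\cfff\dd(\statet)+\lipp_\slow\norm{\tzetat}\big)$ using Lipschitz continuity of $\zetaeq$ and of $\slow$ together with~\eqref{eq:slow_dd}. Condition~\eqref{eq:Vz_minus_theorem} applied to the first bracket gives the drop $-b_3\norm{\tzetat}^2$, while~\eqref{eq:Vz_quadratic_bound} forces $\norm{\fast(\tzetat+\zetaeq(\statet),\statet)-\zetaeq(\statet)}\leq\sqrt{(b_2-b_3)/b_1}\,\norm{\tzetat}$ and hence a bound on $\norm{\tzetatp}$ that is linear in $\dd(\statet)$ and $\norm{\tzetat}$; feeding these into~\eqref{eq:Vz_bound_theorem} to absorb the second bracket produces $\Delta\Vz\leq-b_3\norm{\tzetat}^2+\pr\,k_3\,\dd(\statet)\norm{\tzetat}+\pr\,k_4\norm{\tzetat}^2+\pr^2 k_5\,\dd(\statet)^2$, again with constants depending only on $b_1,\dots,b_4,\lippeq,\cfff,\lipp_\slow$.

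Finally I would add the two bounds, apply Young's inequality to the cross terms $\dd(\statet)\norm{\tzetat}$ with a weight tuned so that the induced extra $\dd(\statet)^2$-coefficient does not exceed $\tfrac{c_1}{2}\pr$, and then choose $\bar\pr\in(0,\bar\pr_1)$ small enough that the residual $O(\pr)$ contributions to the $\norm{\tzetat}^2$-coefficient are dominated by $-\tfrac{b_3}{2}$ and the residual $O(\pr^2)$ contributions to the $\dd(\statet)^2$-coefficient are dominated by $-\tfrac{c_1}{4}\pr$; this gives~\eqref{eq:result_generic} with $\psi:=\min\{\tfrac{c_1}{4}\pr,\tfrac{b_3}{2}\}$. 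I expect the main obstacle to be the bookkeeping in the fast-part estimate: the term $\zetaeq(\statet)-\zetaeq(\statetp)$ couples the fast error back to the slow drift, so one must carefully propagate the $\dd(\statet)$-dependence through~\eqref{eq:Vz_bound_theorem} and verify that, after completing the squares, the $\norm{\tzetat}^2$-coefficient remains uniformly negative of order $1$ (inherited from $b_3$) while the $\dd(\statet)^2$-coefficient remains negative of order $\pr$ — the characteristic two-timescale balance that makes the singular-perturbation argument close.
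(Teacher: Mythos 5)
Your proposal is correct and follows essentially the same route as the paper's proof: the same error coordinate $\tzetat=\zetat-\zetaeq(\statet)$, the same additive split $\Delta V=\Delta\Vc+\Delta\Vz$ with the add-and-subtract of the reduced-system and boundary-layer increments, the Descent Lemma plus~\eqref{eq:nabla_vc_d}--\eqref{eq:slow_dd} for the slow part, and~\eqref{eq:Vz_bound_theorem} with the Lipschitz continuity of $\zetaeq$ to absorb the drift term $\zetaeq(\statet)-\zetaeq(\statetp)$ in the fast part. The only cosmetic differences are that you bound the boundary-layer image via $\sqrt{(b_2-b_3)/b_1}\,\norm{\tzetat}$ instead of via $\lipp_\fast\norm{\tzetat}$, and you close with Young's inequality on the cross terms where the paper checks positive definiteness of a $2\times2$ matrix $\Psi(\pr)$ -- these are equivalent.
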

\begin{proof}
	We note that $(\state\ud0,\zeta\ud0) \in \cSc \times \cSze$ and, thus, in light of~\eqref{eq:invariance}, it holds $(\statet,\zetat) \in \cSc \times \cSze$ for all $\iter \in \N$.
	Now, we define
	$\tzetat := \zetat - \zetaeq(\statet)$,
	and rewrite~\eqref{eq:interconnected_system} as 
	\begin{subequations}\label{eq:interconnected_system_error}
		\begin{align}
			\statetp &= \statet + \pr\slow(\statet,\tzetat+\zetaeq(\statet))\label{eq:slow_system_error}
			\\
			\tzetatp &= \fast(\tzetat + \zetaeq(\statet),\statet) - \zetaeq(\statetp).\label{eq:fast_system_error}
		\end{align}
	\end{subequations}	
	Pick the function $\Vc$ satisfying~\eqref{eq:Vc_theorem}. 
	Thus, by evaluating $\Delta \Vc(\statet) := \Vc(\statetp) - \Vc(\statet)$ along~\eqref{eq:slow_system_error}, we get 
	\begin{align}
		\Delta \Vc(\statet) 
		&=
		\Vc(\statet+\pr\slow(\statet,\tzetat+\zetaeq(\statet))) - \Vc(\statet)
		\notag\\
		&\stackrel{(a)}{=}
		\Vc(\statet+\pr\slow(\statet,\zetaeq(\statet))) - \Vc(\statet) 
		\notag\\
		&
		\hspace{.4cm} 
		+  \Vc(\statet  +  \pr\slow(\statet,\tzetat  +  \zetaeq(\statet))) 
		 -  \Vc(\statet  +  \pr\slow(\statet,\zetaeq(\statet)))
		\notag\\
		&\stackrel{(b)}{\leq}
		-\pr c_1\dd(\statet)^2
		+  \Vc(\statet  +  \pr\slow(\statet,\tzetat  +  \zetaeq(\statet))) 
		 -  \Vc(\statet  +   \pr\slow(\statet,\zetaeq(\statet)))\label{eq:Delta\Vc_int}
		,
	\end{align}
	where in $(a)$ we add and subtract $\Vc(\statet + \pr\slow(\statet,\zetaeq(\statet)))$ and in $(b)$ we use~\eqref{eq:Vc_minus} to bound $\Vc(\statet+\pr\slow(\statet,\zetaeq(\statet))) - \Vc(\statet)$.
	In light of the Lipschitz continuity of $\Vc$ (cf.~\eqref{eq:lipschitz_vc_app}), Descent Lemma~\cite[Prop.~6.1.2]{bertsekas2015convex} and the bound~\eqref{eq:nabla_vc_d} ensure that
	\begin{align}
		\Vc(\state_1+\state_2) - \Vc(\state_1+\state_3)
		&
		 \leq  \cf\dd(\state_1)\norm{\state_2  - \state_3} 
		+\tfrac{\cff}{2} \big(\norm{\state_2}^2  +  \norm{\state_3}^2\big),
		\label{eq:Vc_bound}
	\end{align}
	for all $\state_1, \state_2, \state_3 \! \in \! \cSc$, which allows us to bound~\eqref{eq:Delta\Vc_int} as 
	\begin{align}
		\Delta \Vc(\statet)
		&\leq 
		-\pr c_1\dd(\statet)^2 
		+  \pr \cf \dd(\statet) \|\slow(\statet,\tzetat +  \zetaeq(\statet)) -  \slow(\statet,\zetaeq(\statet))\|
		\notag\\
		&
		\hspace{.4cm}
		+ \pr^2\tfrac{\cff}{2} \left(\|\slow(\statet,\tzetat + \zetaeq(\statet))\|^2   +    \|\slow(\statet,\zetaeq(\statet))\|^2\right).\label{eq:Delta\Vc}
	\end{align}
	Now, we add and subtract $\slow(\statet,\zetaeq(\statet))$ into $\|\slow(\statet,\tzetat + \zetaeq(\statet))\|^2$ and use the bound~\eqref{eq:slow_dd} in~\eqref{eq:Delta\Vc} to get
	\begin{align}
		\Delta \Vc(\statet)
		&\leq
		-\pr (c_1 - \pr \tfrac{\cff\cfff^2}{2}) \dd(\statet)^2
		+ \pr \cf \dd(\statet)\|\slow(\statet,\tzetat  +  \zetaeq(\statet))  -  \slow(\statet,\zetaeq(\statet))\|
		\notag\\
		&\hspace{.4cm}
		+ \pr^2\tfrac{\cff}{2}\|\slow(\statet,\tzetat  +  \zetaeq(\statet))  -  \slow(\statet,\zetaeq(\statet))  +  \slow(\statet,\zetaeq(\statet))\|^2
		\notag\\
		&\stackrel{(a)}{\leq}
		-\pr (c_1 - \pr \cff\cfff^2)\dd(\statet)^2
		+ \pr \cf\lipp_{\slow}\dd(\statet)\|\tzetat\|
		+ \pr^2\cff\lipp_{\slow}^2\|\tzetat\|^2 
		+ \pr^2\cff\cfff\lipp_{\slow}\|\tzetat\|\dd(\statet),\label{eq:Delta\Vc_final}
	\end{align}
	where $(a)$ exploits the Lipschitz continuity of $\slow$, the triangle and Young's inequalities, and~\eqref{eq:slow_dd}.
	Now, by evaluating $\Delta \Vzeta(\tzetat) \!:=\! \Vzeta(\tzetatp) \!-\! \Vzeta(\tzetat)$ along the trajectories~\eqref{eq:fast_system_error}, we get
	\begin{align}
		\Delta \Vzeta(\tzetat) &= \Vzeta(\fast(\tzetat + \zetaeq(\statet),\statet) - \zetaeq(\statetp)) - \Vzeta(\tzetat)
		\notag\\
		&\stackrel{(a)}{=} \Vzeta(\fast(\tzetat+\zetaeq(\statet),\statet)-\zetaeq(\statet)) - \Vzeta(\tzetat) 
		\notag\\
		&\hspace{.4cm}
		+ \Vzeta(\fast(\tzetat + \zetaeq(\statet),\statet) - \zetaeq(\statetp)) 
		-  \Vzeta(\fast(\tzetat+\zetaeq(\statet),\statet)-\zetaeq(\statet))
		\notag\\
		&\stackrel{(b)}{\leq}
		-b_3\|\tzetat\|^2 + \tilde{\Vzeta}(\tzetat,\statet),\label{eq:\Vzeta_before_tilde_\Vzeta}
	\end{align}
	where in $(a)$ we add and subtract the term $\Vzeta(\fast(\tzetat+\zetaeq(\statet),\statet)-\zetaeq(\statet))$ and in $(b)$ we bound $\Vzeta(\fast(\tzetat+\zetaeq(\statet),\statet)-\zetaeq(\statet)) - \Vzeta(\tzetat)$ by using~\eqref{eq:Vz_minus_theorem} and introduce
	\begin{align*}
		\tilde{\Vzeta}(\tzetat,\statet) &:= \Vzeta(\fast(\tzetat+\zetaeq(\statet),\statet)-\zetaeq(\statetp)) 
		- \Vzeta(\fast(\tzetat+\zetaeq(\statet),\statet)-\zetaeq(\statet)).
	\end{align*}
	By using~\eqref{eq:Vz_bound_theorem}, we bound the term $\tilde{\Vzeta}(\tzetat,\statet)$ as
	\begin{align}
		\tilde{\Vzeta}(\tzetat,\statet)
		&\leq  b_4 \norm{\zetaeq(\statetp)  -  \zetaeq(\statet)}\|\fast(\tzetat  +  \zetaeq(\statet),\statet)  - \zetaeq(\statetp)\|
		\notag\\
		&\hspace{.4cm}
		+b_4  \norm{\zetaeq(\statetp)  -  \zetaeq(\statet)}\|\fast(\tzetat  +  \zetaeq(\statet),\statet)  - \zetaeq(\statet)\|
		\notag\\
		&\stackrel{(a)}{\leq}
		b_4 \norm{\zetaeq(\statetp)  -  \zetaeq(\statet)}^2
		+  b_4 2 \norm{\zetaeq(\statetp)  -  \zetaeq(\statet)}\|\fast(\tzetat  \! + \!  \zetaeq(\statet),\statet)  \! - \! \zetaeq(\statet)\|
		\notag\\
		&\stackrel{(b)}{\leq}
		\pr^2b_4\lippeq^2\|\slow(\statet,\tzetat + \zetaeq(\statet))\|^2
		+  \pr b_42\lippeq\lipp_{\fast}\|\slow(\statet,\tzetat + \zetaeq(\statet))\|\|\tzetat\|,\label{eq:tilde\Vzeta}
	\end{align}
	where in $(a)$ we add within the second norm $\pm\zetaeq(\state)$ and use the triangle inequality, while in $(b)$ we use $\fast(\zetaeq(\statet),\statet) = \zetaeq(\statet)$ 
	and exploit the Lipschitz continuity of $\zetaeq$ and $\fast$. 
	Now, we add and subtract $\slow(\statet,\zetaeq(\statet))$ in $\|\slow(\statet,\tzetat + \zetaeq(\statet))\|^2$ and $\|\slow(\statet,\tzetat + \zetaeq(\statet))\|$, then we use the triangle inequality, the Lipschitz property of $\slow$, and~\eqref{eq:slow_dd} to bound~\eqref{eq:tilde\Vzeta} as 
	\begin{align}
		\tilde{\Vzeta}(\tzetat,\statet) 
		&
		\leq\pr^2b_4\lippeq^2\lipp_{\slow}^2\|\tzetat\|^2 +\pr^2\cfff^2b_4\lippeq^2\dd(\statet)^2
		+ \pr^2b_4\cfff2\lippeq^2\lipp_{\slow}\|\tzetat\|\dd(\statet)
		+  \pr b_42\lippeq\lipp_{\fast}\lipp_{\slow}\|\tzetat\|^2
		+  \pr b_4\cfff2\lippeq\lipp_{\fast}\dd(\statet)\|\tzetat\|.
		\label{eq:tilde\Vzeta_final}
	\end{align}
	We then use~\eqref{eq:tilde\Vzeta_final} to further bound~\eqref{eq:\Vzeta_before_tilde_\Vzeta} as 
	\begin{align}
		\Delta \Vzeta(\tzetat) 
		&
		\leq - b_3\|\tzetat\|^2 + \pr^2b_4\lippeq^2\lipp_{\slow}^2\|\tzetat\|^2 
		+\pr^2\cfff^2 b_4\lippeq^2\dd(\statet)^2
		\notag
		\\
		&\hspace{.4cm}
		+ \pr^2b_4\cfff 2\lippeq^2\lipp_{\slow}\|\tzetat\|\dd(\statet)
		+  \pr b_42\lippeq\lipp_{\fast}\lipp_{\slow}\|\tzetat\|^2
		+  \pr b_4\cfff 2\lippeq\lipp_{\fast}\dd(\statet)\|\tzetat\|.\label{eq:Delta\Vzeta_final}
	\end{align}
	Now, let us consider $V$ as defined in~\eqref{eq:V}.
	Thus, by evaluating $\Delta V(\statet,\tzetat) \!:=\! V(\statetp,\tzetatp) \!-\! V(\statet,\tzetat) \!=\! \Delta \Vc(\statet) \!+\! \Delta \Vzeta(\tzetat)$ along the trajectories of~\eqref{eq:interconnected_system_error}, we use~\eqref{eq:Delta\Vc_final} and~\eqref{eq:Delta\Vzeta_final} to write
	\begin{align}
		\Delta V  (\statet,\tzetat)  \leq 
		 -  \begin{bmatrix}
			\dd(\statet)\\
			\|\tzetat\|
		\end{bmatrix}\T
		 \Psi(\pr)
		\begin{bmatrix}
			\dd(\statet)
			\\
			\|\tzetat\|
		\end{bmatrix},\label{eq:H_theorem}
	\end{align}
	for all $\iter\in\N$, where $\Psi(\pr) = \Psi(\pr)\T \in \R^{2\times 2}$ is defined as
	\begin{align*}
		\Psi(\pr) := \begin{bmatrix}
			\pr c_1-\pr^2k_1& -\pr k_2 - \pr^2 k_3
			\\
			-\pr k_2 - \pr^2 k_3& b_3 -\pr k_4-\pr^2 k_5
		\end{bmatrix},
	\end{align*}
	in which the notation has been shortened through the constants
	\begin{align*}
		k_1&:=\cfff^2(b_4\lippeq^2+\cff), 
		\hspace{0.1cm}
		k_2:=\tfrac{\cfff(\cf\lipp_{\slow}+b_42\lipp_{\fast}\lippeq)}{2} %
		\notag\\
		k_3&:=\tfrac{\cff\lipp_{\slow} + b_4\cfff2\lipp_{\slow}\lippeq^2}{2}
		\\
		k_4&:=b_42\lipp_{\slow}\lipp_{\fast}\lippeq, \hspace{0.18cm}
		k_5:=\cff\lipp_{\slow}^2+b_4\lipp_{\slow}^2\lippeq^2.
	\end{align*}
	Being $\Psi(\pr) = \Psi(\pr)\T$, it holds $\Psi(\pr) > 0$ if and only if 
	\begin{align}\label{eq:condition_theorem}
		\begin{cases}
			\pr c_1 > p_1(\pr)
			\\
			\pr c_1b_3 > p_2(\pr),
		\end{cases}
	\end{align}
	where we have introduced the polynomials $p_1(\pr) := \pr^2k_1$ and
		\begin{align*}
			p_2(\pr)\! &:= \! \pr^2 c_1\!(k_4 \! + \! \pr k_5) \! + \! \pr^2k_1\!(b_3 \!- \! \pr k_4 \! - \! \pr^2 k_5)
			\! + \! (\pr k_2\! + \! \pr^2 k_3)^2\!.
		\end{align*}
	Since $\lim_{\pr \to 0} p_1(\pr)/\pr \!=\! \lim_{\pr \to 0} p_2(\pr)/\pr \!=\! 0$, there exists $\bar{\pr} \!\in\! (0,\bar{\pr}_1)$ such that~\eqref{eq:condition_theorem} is met for all $\pr \!\in\! (0,\bar{\pr})$.
	The proof of~\eqref{eq:result_generic} follows by letting $\psi \!>\! 0$ be the smallest eigenvalue of $\Psi(\pr)$. 
\end{proof}

\subsection{Composite Dynamic Average Consensus}
\label{sec:nested}

In the case of $\agg$-consensus matching~\eqref{eq:nested_agg}, the dynamics~\eqref{eq:local_update_trackers} can be designed as the cascade between two distributed schemes able to solve standard dynamic average consensus problems.
In detail, for all $i \in \set$, given the local aggregation rules $\laggIi$ and $\laggEi$ and the (fixed) variables $\statei$, we consider the cascade 
\begin{subequations}\label{eq:generic_consensus_dynamics}
	\begin{align}
		\hspace{-.2cm}
		\zIitp \!&=\! \trIi(\laggIni(\stateni),\zInit)
		\\
		\hspace{-.2cm}
		\zEitp \!&=\! \trEi(\laggEni(\stateni,\taggIni(\laggIni(\stateni),\zInit)),\zEit),\!
		\label{eq:generic_consensus_dynamics_E}
	\end{align}
\end{subequations}
where $\zIit \!\!\in\!\! \R^{\nzIi}$ and $\zEit \!\!\in\!\! \R^{\nzEi}$ are the states, $\trIi: \R^{\degi\naI} \!\times\! \R^{\sum_{j\in \cN_i}\nzIj} \!\to\! \R^{\nzIi}$ and $\trEi: \R^{\degi\naE} \!\times\! \R^{\sum_{j\in \cN_i}\nzEj} \! \to \! \R^{\nzEi}$ describe their dynamics.
Finally, $\laggIni$ and $\laggEni$ collect the contributions $\laggIj$ and $\laggEj$ of the neighbors of agent $i$ to $\aggI$ and $\aggE$, while $\taggIni$ collects their estimates $\taggIi: \R^{\naI} \times \R^{\nzIi} \to \R^{\naI}$ of $\aggI$, namely
\begin{align*}
	\laggIni(\stateni) &:= \col{\laggIj(\statej)}_{j \in \cN_i}
	\\
	\laggEni(\stateni,v_{\cN_i}) &:= \col{\laggEj(\statej,v_j)}_{j \in \cN_i}
	\\
	\taggIni(\state_{\cN_i},\zni) &:= \col{\taggIj(\laggIj(\state_j),\z_j)}_{j \in \cN_i},
\end{align*}
with $v_j \!\in\! \R^{\naI}$ and $\z_j \!\in\! \R^{\nzIj}$ for all $j \!\!\in\!\! \cN_i$.
We stress that agent $i$ locally approximates $\aggI(\state)$ by using $\taggIi(\laggIi(\statei),\zIi)$ into~\eqref{eq:generic_consensus_dynamics_E}.
By collecting the local updates~\eqref{eq:generic_consensus_dynamics}, we get%
\begin{subequations}\label{eq:generic_consensus_dynamics_global}
	\begin{align}
		\zItp &= \trI(\laggI(\state),\zIt)
		\\
		\zEtp &= \trE(\laggE(\state,\taggI(\laggI(\state),\zIt)),\zEt),
	\end{align}
\end{subequations}
where $\zI := \col{\z_{{\scriptscriptstyle I,1}},\dots,\z_{\ssI,{\scriptscriptstyle N}}} \in \R^{\nzI}$, $\zE := \col{\z_{{\scriptscriptstyle E,1}},\dots,\z_{\ssE,{\scriptscriptstyle N}}} \in \R^{\nzE}$ with $\nzI := \sum_{i=1}^N \nzIi$ and $\nzE := \sum_{i=1}^N \nzEi$, while $\laggI: \R^{\nstate} \to \R^{N\naI}$, $\taggI: \R^{N\naI} \times \R^{\nzI}$, and $\laggE: \R^{N\naE} \times \R^{\nzE} \to \R^{N\naE}$, $\trI: \R^{N\naI} \times \R^{\nzI} \to \R^{\nzI}$, and $\trE: \R^{N\nzE} \times \R^{\nzE} \to \R^{\nzE}$ are defined as 
\begin{align*}
	&\laggI(\state) \!\!:=\!\!\! \begin{bmatrix}\lagg_{{\scriptscriptstyle I,1}}(\state_1)\\
		\vdots\\
		\lagg_{{\scriptscriptstyle I,N}}(\state_N)
	\end{bmatrix}\!\!, \hspace{.025cm} \taggI(\laggI(\state),\zI) \!\!:=\!\!\! \begin{bmatrix}
		\tagg_{{\scriptscriptstyle I,1}}(\lagg_{{\scriptscriptstyle I,1}}(\state_1),\z_{{\scriptscriptstyle I,1}}) 
		\\
		\vdots 
		\\
		\tagg_{{\scriptscriptstyle I,N}}(\lagg_{{\scriptscriptstyle I,N}}(\state_N),\z_{{\scriptscriptstyle I,N}}) 
	\end{bmatrix}
\end{align*}	
\begin{align*}
	&\laggE(\state,\taggI(\laggI(\state),\zI)) := \begin{bmatrix}\lagg_{{\scriptscriptstyle E,1}}(\state_1,\tagg_{\ssI,\scriptscriptstyle{1}}(\lagg_{\ssI,\scriptscriptstyle{1}}(\state_1),\z_{\ssI,\scriptscriptstyle{1}}))\\
		\vdots\\
		\lagg_{\ssE,\scriptscriptstyle{N}}(\state_N,\tagg_{{\scriptscriptstyle I,N}}(\lagg_{\ssI,\scriptscriptstyle{N}}(\state_N),\z_{\ssI,\scriptscriptstyle{N}}))
	\end{bmatrix}\!\!
	\\
	&\trI(\laggI(\state),\zI) := \begin{bmatrix}
		\tr_{\ssI,\scriptscriptstyle{1}}(\lagg_{\ssI,\scriptscriptstyle{1}}(\state_1),\z_{\ssI,\scriptscriptstyle{1}}) 
		\\
		\vdots
		\\
		\tr_{\ssI,\scriptscriptstyle{N}}(\lagg_{\ssI,\scriptscriptstyle{N}}(\state_N),\z_{\ssI,\scriptscriptstyle{N}}) 
	\end{bmatrix}
	\\
	&\trE(v_\ssE,\zE) := \begin{bmatrix}
		\tr_{\ssI,\scriptscriptstyle{1}}(v_{\ssE,\scriptscriptstyle{1}},\z_{\ssE,\scriptscriptstyle{1}}) 
		\\
		\vdots
		\\
		\tr_{\ssI,\scriptscriptstyle{N}}(v_{\ssE,\scriptscriptstyle{N}},\z_{\ssE,\scriptscriptstyle{N}})
	\end{bmatrix}.
\end{align*}
We also introduce $\taggEi: \R^{\naE} \times \R^{\nzEi} \to \R^{\naE}$ that locally approximates $\aggE$.
Then, let $\taggE:\R^{N\naE} \times \R^{\nzE} \to R^{N\naE}$ be 
\begin{align*}
	\taggE(v,w) := \begin{bmatrix}\tagg_{\ssE,\scriptscriptstyle{1}}(v_1,w_1)\T& \hdots& \tagg_{\ssE,\scriptscriptstyle{N}}(v_N,w_N)\T
	\end{bmatrix}\T.
\end{align*}
Given $\uI:=\col{u_{\ssI,\scriptscriptstyle{1}}, \dots, u_{\ssI,\scriptscriptstyle{N}}} \in \R^{N\naI}$ and $\uE:=\col{u_{\ssE,\scriptscriptstyle{1}}, \dots u_{\ssE,\scriptscriptstyle{N}}} \in \R^{N\naE}$ with $u_{\ssI,\scriptscriptstyle{i}}\in \R^{\naI}$ and $u_{\ssE,\scriptscriptstyle{i}} \in \R^{\naE}$ for all $i \in \set$, we consider the ``internal'' auxiliary system
	\begin{align}
		\zItp &= \trI(\uI,\zIt),\label{eq:auxiliary_dynamic_average_consensus_I}
	\end{align}
and the ``external'' one described by
\begin{align}
	\zEtp &= \trE(\uE,\zEt).\label{eq:auxiliary_dynamic_average_consensus_E}
\end{align}
We now give the definition of Linearly Converging Dynamic Average Consensus (\acro/) to formalize the requirements needed by the subsystems of~\eqref{eq:generic_consensus_dynamics_global} (separately considered) to make their cascade fit for the role of the consensus scheme~\eqref{eq:consensus_oriented}.
Given $N$ vectors $u_1,\dots, u_N \!\in\! \R^{\na}$, this definition uses $u \!:=\!\col{u_1,\dots,u_N}\!\in\! \R^{N\na}$ and $u_{\cN_i} \!:=\! \col{u_j}_{j \in \cN_i}\!\in\! \R^{\degi\na}$.
\begin{definition}[\acro/]\label{def:dynamic_average_consensus}
	Let $\zit \in \R^{\nzi}$ and consider
	\begin{align}\label{eq:generic_tracker_local}
		\zitp = \tri(u_{\cN_i},\znit),
	\end{align}
	with $\tri: \R^{\degi\na} \!\times\! \R^{\sum_{j\in\cN_i}^N \nzj} \!\to\! \R^{\nzi}$, and the stacked form 
	\begin{align}\label{eq:generic_tracker_global}
		\ztp = \tr(u,\zt),
	\end{align}
	with $\zt:=\col{\zt_1,\dots,\zt_N}\in \R^{\nz}$ and $\tr(u,\zt) := \col{\tr_1(u_{\cN_1},\zt_{\cN_1}),\dots,\tr_N(u_{\cN_N},\zt_{\cN_N})}\in \R^{\nz}$ with $\nz := \sum_{i=1}^N \nzi$.
	We say that system~\eqref{eq:generic_tracker_global} is a Linearly Converging Algorithm for Dynamic Average Consensus (\acro/) if it satisfies Assumptions~\ref{ass:orthogonality},~\ref{ass:equilibria_consensus}, and~\ref{ass:tracking}.\oprocend
\end{definition}
\begin{proposition}\label{prop:nested_trackers}
	Assume that $\laggI$ and $\laggE$ are Lipschitz continuous with constants $\lipp_{\laggI},\lipp_{\laggE} \!>\! 0$, respectively.
	Assume that~\eqref{eq:auxiliary_dynamic_average_consensus_I} and~\eqref{eq:auxiliary_dynamic_average_consensus_E} are \acro/ with transformation maps $\TTI(\zI) \!\!:=\!\! \begin{bmatrix}\TbI(\zI)\T &\!\TpI(\zI)\T\end{bmatrix}\T$ and $\TTE(\zE) := \begin{bmatrix}\TbE(\zE)\T &\TpE(\zE)\T\end{bmatrix}\T$, invariant sets $\cSbzI$ and $\cSbzE$, equilibrium functions $\pzIeq$ and $\pzEeq$, approximation functions $\taggI$ and $\taggE$, and Lyapunov functions $\VzI$ and $\VzE$, respectively.
	Then, system~\eqref{eq:generic_consensus_dynamics_global} satisfies Assumptions~\ref{ass:orthogonality},~\ref{ass:equilibria_consensus}, and~\ref{ass:tracking} with%
	\begin{subequations}\label{eq:settings}
	\begin{align}
		&\z :=
		\begin{bmatrix}
		   \zI
		   \\
		   \zE
	   \end{bmatrix},\quad \cSbz := \cSbzI \times \cSbzE 		
		\\
		&\Tb(\z) :=  \begin{bmatrix}\TbI(\zI)\\\TbE(\zE)\end{bmatrix}, \quad \Tp(\z) := \begin{bmatrix}\TpI(\zI)\\\TpE(\zE)\end{bmatrix}
		\\
		&
		\pzeq(\state) :=  
		\begin{bmatrix}
			\pzIeq(\state)
			\\
			\pzEeq(\state)
		\end{bmatrix}
		\\
		&\tr(\state,\z) := \begin{bmatrix}
			\trI(\laggI(\state),\zI)
			\\
			\trE(\laggE(\state,\taggI(\laggI(\state),\zI)),\zE)
		\end{bmatrix}
		\\
		&\tagg(\state,\z) := \begin{bmatrix}
			\taggI(\laggI(\state),\zI)
			\\
			\taggE(\laggE(\state,\taggI(\laggI(\state),\zI)),\zE)
		\end{bmatrix}\label{eq:tagg_perturbed_consensus}
		\\
		&\Vz(\tzI,\tzE) := \VzI(\tzI) + \kappa\VzE(\tzE),
		\label{eq:Vz_VzI_VzE}
	\end{align}
	\end{subequations}
	for a sufficiently large $\kappa > 0$.
\end{proposition}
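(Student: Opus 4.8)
\emph{Overall strategy.}
The goal is to certify that the cascade~\eqref{eq:generic_consensus_dynamics_global}, equipped with the data in~\eqref{eq:settings}, is itself a valid \cnsor scheme, i.e., it satisfies Assumptions~\ref{ass:orthogonality},~\ref{ass:equilibria_consensus}, and~\ref{ass:tracking}. The structural remark driving the whole argument is that~\eqref{eq:generic_consensus_dynamics_global} is the ``internal'' \acro/ \eqref{eq:auxiliary_dynamic_average_consensus_I}, fed by the $\state$-only input $\laggI(\state)$, driving the ``external'' \acro/ \eqref{eq:auxiliary_dynamic_average_consensus_E}, fed by the \emph{$(\state,\zI)$-dependent} input $\laggE(\state,\taggI(\laggI(\state),\zI))$; hence the pair forms a cascade, clean in the internal variable and perturbed in the external one. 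First I would fix the invertible transformation $\TT$ assembled from $\TTI$ and $\TTE$ as prescribed in~\eqref{eq:settings}, observing that in the new coordinates $\bzI$ evolves under $\TbI\circ\trI$ (driven only by $\statet$, as the $\zI$-update does not involve $\zE$) and $\bzE$ evolves under $\TbE\circ\trE$; forward invariance of $\cSbz=\cSbzI\times\cSbzE$ then follows from forward invariance of $\cSbzI$ and of $\cSbzE$, the latter holding for \emph{every} external input and, in particular, for the $(\statet,\zIt)$-dependent one arising here.

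\emph{Assumptions~\ref{ass:orthogonality} and~\ref{ass:equilibria_consensus}.}
Substituting the orthogonality relations of the two building blocks into the cascade gives the explicit reduced maps $\ptagg(\state,\pz)=\col{\ptaggI(\laggI(\state),\pzI),\;\ptaggE(\laggE(\state,\ptaggI(\laggI(\state),\pzI)),\pzE)}$ and $\ptr(\state,\pz)=\col{\ptrI(\laggI(\state),\pzI),\;\ptrE(\laggE(\state,\ptaggI(\laggI(\state),\pzI)),\pzE)}$, which are Lipschitz as compositions of Lipschitz maps (using the hypotheses on $\laggI,\laggE$ and the \acro/ regularity of $\ptaggI,\ptaggE,\ptrI,\ptrE$), up to the fixed reordering identifying the stacked internal/external blocks with $\1\agg(\state)$ in the sense of~\eqref{eq:nested_agg}. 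For Assumption~\ref{ass:equilibria_consensus}, the candidate equilibrium is $\pzeq(\state)=\col{\pzIeq(\state),\pzEeq(\state)}$, with (overloading notation as in the statement) $\pzIeq(\state)$ the internal equilibrium at input $\laggI(\state)$ and $\pzEeq(\state)$ the external equilibrium at input $\laggE(\state,\1\aggI(\state))$. The two internal conditions are inherited verbatim; the \emph{external} reconstruction~\eqref{eq:zeq_reconstruction} and equilibrium~\eqref{eq:zeq_equilibrium} conditions close the loop because, at the internal equilibrium, the internal estimate satisfies $\ptaggI(\laggI(\state),\pzIeq(\state))=\1\aggI(\state)$, so the external block is effectively driven by the converged input $\laggE(\state,\1\aggI(\state))$, whose per-agent average is precisely $\aggE(\state,\1\aggI(\state))$. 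Lipschitz continuity of $\pzeq$ is, once more, a composition argument.

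\emph{Assumption~\ref{ass:tracking} (the crux).}
I take $\Vz=\VzI+\kappa\VzE$. The quadratic bounds and the incremental bound in~\eqref{eq:Vz_theorem_distributed} are immediate, with constants such as $\min\{b_1^I,\kappa b_1^E\}$ and $\max\{b_2^I,\kappa b_2^E\}$. For the decrease, I would pass to the error coordinates $\tzI=\pzI-\pzIeq(\state)$, $\tzE=\pzE-\pzEeq(\state)$ and evaluate $\Delta\Vz=\Delta\VzI+\kappa\Delta\VzE$ along the map $\tz\mapsto\ptr(\state,\tz+\pzeq(\state))-\pzeq(\state)$ appearing in Assumption~\ref{ass:tracking}. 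The internal term is clean: the $\tzI$-recursion is exactly the internal \acro/'s error dynamics at input $\laggI(\state)$, so $\Delta\VzI\le-b_3^I\norm{\tzI}^2$ by Assumption~\ref{ass:tracking} applied to~\eqref{eq:auxiliary_dynamic_average_consensus_I}. The external term is a perturbation of the external \acro/'s error dynamics at input $\laggE(\state,\1\aggI(\state))$: the actual input differs from it by $\laggE(\state,\ptaggI(\laggI(\state),\tzI+\pzIeq(\state)))-\laggE(\state,\1\aggI(\state))$, of norm $O(\norm{\tzI})$ by Lipschitz continuity of $\laggE$ and $\ptaggI$, and, correspondingly, the equilibrium it should be measured against drifts from $\pzEeq(\state)$ by $O(\norm{\tzI})$ (Lipschitz $\pzEeq$). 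Adding and subtracting these quantities and invoking the external Lyapunov's decrease together with its incremental bound, one obtains $\Delta\VzE\le-c_E\norm{\tzE}^2+(\text{terms quadratic and bilinear in }\norm{\tzI},\norm{\tzE})$; Young's inequality then collects $\Delta\Vz$ into minus a quadratic form in $(\norm{\tzI},\norm{\tzE})$, and $\kappa$ is tuned so that this form is positive definite, which yields the desired $b_3>0$. All the involved Lipschitz estimates are uniform in $\state$ because Assumptions~\ref{ass:orthogonality},~\ref{ass:equilibria_consensus}, and~\ref{ass:tracking} for the building blocks are uniform in their respective parameters.

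\emph{Main obstacle.}
The delicate step is exactly the last one: unlike a product of two independent \acro/ schemes, here the external scheme's input---and hence its parametrized equilibrium---is perturbed by the internal tracking error, so one must carefully bookkeep every cross term, check that its bound is uniform in $\state$, and verify that the resulting $2\times2$ quadratic form can be rendered positive definite by the single scalar weight $\kappa$. This is the discrete-time, \acro/-level analogue of the cascade / singular-perturbation reasoning used for Theorem~\ref{th:main} itself.
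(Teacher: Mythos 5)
Your proposal is correct and follows essentially the same route as the paper's proof: verify Assumptions~\ref{ass:orthogonality} and~\ref{ass:equilibria_consensus} by composition of the two \acro/ blocks' orthogonality, equilibrium, reconstruction, and Lipschitz properties, then establish Assumption~\ref{ass:tracking} with $\Vz=\VzI+\kappa\VzE$ by adding and subtracting the external error map evaluated at $\tzI=0$, bounding the cross terms via the incremental bound on $\VzE$ and the Lipschitz continuity of the perturbed external dynamics, and rendering the resulting $2\times 2$ quadratic form in $(\norm{\tzI},\norm{\tzE})$ positive definite by tuning $\kappa$. Your agnostic phrasing ``$\kappa$ is tuned so that this form is positive definite'' is in fact the safer statement, since the paper's own computation ends up requiring $\kappa$ sufficiently \emph{small} (it imposes $\kappa<\min\{\cdot,\cdot\}$), despite the proposition's wording ``sufficiently large.''
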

\begin{proof}
	Given $\nbI$, $\nbE$, $\npI$, $\npE \!\in\! \N$ such that $\nzI \!=\! \nbI \!+\! \npI$ and $\nzE \!=\! \nbE \!+\! \npE$, let $\bzI \!\in\! \R^{\nbI}$, $\bzE\!\in\! \R^{\nbE}$, $\pzI \!\in\! \R^{\npI}$, and $\pzE \!\in\! \R^{\npE}$ be defined as 
	\begin{align}
		\begin{bmatrix}
			\bzI 
			\\
			\pzI
		\end{bmatrix} := \TTI(\zI), \quad \begin{bmatrix}
			\bzE
			\\
			\pzE
		\end{bmatrix} := \TTE(\zE),
	\end{align}
	where, by Definition~\ref{def:dynamic_average_consensus}, $\TTI$ and $\TTE$ comply with the generic properties in Assumption~\ref{ass:orthogonality}.
	In detail, in light of the invariance properties of $\cSbzI$ and $\cSbzE$ for~\eqref{eq:auxiliary_dynamic_average_consensus_I} and~\eqref{eq:auxiliary_dynamic_average_consensus_E}, respectively, we exploit~\eqref{eq:orthogonality} in Assumption~\ref{ass:orthogonality} to rewrite the cascade~\eqref{eq:generic_consensus_dynamics_global} as
	\begin{subequations}\label{eq:I_E_decomposed}
		\begin{align}
			\begin{bmatrix}
				\bzItp 
				\\
				\bzEtp
			\end{bmatrix} &= 
			\begin{bmatrix}
				\TbI(\trI(\laggI(\state),\zIt))
				\\
				\TbE(\trE(\laggE(\state,\ptaggI(\state,\pzIt)),\zEt))
			\end{bmatrix}\label{eq:I_E_decomposed_bar}
			\\
			\begin{bmatrix}
				\pzItp 
				\\
				\pzEtp
			\end{bmatrix} &=
			\begin{bmatrix}
				\ptrI(\laggI(\state),\pzIt) 
				\\[.2em]
				\ptrE(\laggE(\state,\ptaggI(\state,\pzIt)),\pzEt) 
			\end{bmatrix}
			,
			\label{eq:I_E_decomposed_perp}
		\end{align}
	\end{subequations}
	for suitable functions $\ptaggI$ and $\ptaggE$ (see~\eqref{eq:ptagg}) and $\ptrI$ and $\ptrE$ (cf.~\eqref{eq:ptr}).
	Further, by Definition~\ref{def:dynamic_average_consensus}, it holds~\eqref{eq:zeq_equilibrium} (cf. Assumption~\ref{ass:equilibria_consensus}),
	namely, there exist $\pzIeq: \R^{N\naI} \to \R^{\npI}$ and $\pzEeq: \R^{N\naE} \to \R^{\npE}$ such that, for all $\uI \in \R^{N\naI}$ and $\uE \in \R^{\naE}$, we guarantee the equilibrium conditions
	\begin{subequations}\label{eq:I_E_equilibrium}
		\begin{align}
			\pzIeq(\uI) &= \ptrI(\uI,\pzIeq(\uI))\label{eq:I_equilibrium}
			\\
			\pzEeq(\uE) &= \ptrE\left(\uE,\pzEeq(\uE)\right).\label{eq:E_equilibrium}
		\end{align}
	\end{subequations}
	By Definition~\ref{def:dynamic_average_consensus}, also the reconstruction conditions~\eqref{eq:zeq_reconstruction} (cf. Assumption~\ref{ass:equilibria_consensus}) are verified, namely it holds
	\begin{subequations}\label{eq:I_E_reconstruction}
		\begin{align}
			\ptaggI(\uI,\pzIeq(\uI)) &= \dfrac{\1}{N}\sum_{i=1}^N \uIi\label{eq:I_reconstruction}
			\\
			\ptaggE(\uE,\pzEeq(\uE)) &=  \dfrac{\1}{N}\sum_{i=1}^N \uEi,\label{eq:E_reconstruction}
		\end{align}
	\end{subequations}
	for all $\uI \in \R^{N\naI}$ and $\uE \in \R^{\naE}$.
	By noting also the Lipschitz conditions ensured by Definition~\ref{def:dynamic_average_consensus} and looking at~\eqref{eq:I_E_decomposed},~\eqref{eq:I_E_equilibrium}, and~\eqref{eq:I_E_reconstruction}, we have that the cascade system~\eqref{eq:generic_consensus_dynamics_global} satisfies the orthogonality, reconstruction, equilibria, and Lipschitz conditions in Assumptions~\ref{ass:orthogonality},~\ref{ass:equilibria_consensus}, and~\ref{ass:tracking} with the settings detailed in~\eqref{eq:settings}.
	Hence, to conclude the proof, we need to show that the Lyapunov function $\Vz$ (cf.~\eqref{eq:Vz_VzI_VzE}) satisfies the conditions detailed in~\eqref{eq:Vz_theorem_distributed} (cf. Assumption~\ref{ass:tracking}) along the cascade system~\eqref{eq:generic_consensus_dynamics_global}.
	Then, we neglect~\eqref{eq:I_E_decomposed_bar} and focus on~\eqref{eq:I_E_decomposed_perp} written in the coordinates $\tzI:= \pzI - \pzIeq(\state)$ and $\tzE := \pzE - \pzEeq(\state)$, namely
	\begin{subequations}\label{eq:I_E_error_coordinates}
		\begin{align}
			\tzItp &= \pttrI(\state,\tzIt)\label{eq:I_error_coordinates}
			\\
			\tzEtp &= \pttrE(\state,\tzIt,\tzEt),\label{eq:E_error_coordinates}
		\end{align}
	\end{subequations}
	where %
	\begin{align*}
		\pttrI(\state,\tzI) &:= \ptrI(\laggI(\state),\tzI + \pzIeq(\state)) - \pzIeq(\state)
		\\
		\pttrE(\state,\tzI,\tzE) &:= - \pzEeq(\state)
		+\trE(\laggE(\state,\ptaggI(\laggI(\state),\tzI + \pzIeq(\state))),\tzE + \pzEeq(\state)).
	\end{align*}
	The Lipschitz properties in Definition~\ref{def:dynamic_average_consensus} and the ones of each $\laggIi$ and $\laggEi$ ensure there exist $\lipp_{{\scriptscriptstyle \pttrI}}, \lipp_{{\scriptscriptstyle \pttrE}}\!>\! 0$ such that%
	\begin{subequations}\label{eq:lipp_ttr}
		\begin{align}
			\norm{\pttrI(\state,\tzI)  -  \pttrI(\state^\prime,\tzI^\prime)} 
			&
			\leq  \lipp_{{\scriptscriptstyle \pttrI}}(\norm{\state   -  \state^\prime}  +  \norm{\tzI  -  \tzI^\prime})
			\\
			\norm{\pttrE(\state,\tzI,\tzE)  -  \pttrE(\state^\prime,\tzI^\prime,\tzE^\prime)}  
			&
			\leq  \lipp_{{\scriptscriptstyle \pttrE}}(\norm{\state  -  \state^\prime}  \! + \!  \norm{\tzI  -  \tzI^\prime} \! + \! \norm{\tzE -  \tzE^\prime}),\label{eq:lipp_ttrE}
		\end{align}
	\end{subequations}
	for all $\state, \state^\prime \in \R^{\nstate}$, $\tzI, \tzI^\prime \in \R^{\nzI}$, and $\tzE, \tzE^\prime \in \R^{\nzE}$.
	By Definition~\ref{def:dynamic_average_consensus}, the two schemes separately considered satisfy Assumption~\ref{ass:tracking}.
	Thus, there exists $\VzI\!:\! \R^{\npI} \!\to\! \R$ such that 
	\begin{subequations}\label{eq:VzI}
		\begin{align}
			&b_{{\scriptscriptstyle I,1}}\norm{\tzI}^2 \leq \VzI(\tzI) \leq b_{{\scriptscriptstyle I,2}}\norm{\tzI}^2\label{eq:VzI_quadratic_bound}
			\\
			&\VzI( \pttrI(\state,\tzI)) \! - \! \VzI(\tzI) \leq - b_{{\scriptscriptstyle I,3}}\norm{\tz}^2\label{eq:VzI_minus}
			\\
			&|\VzI(\tzI) - \Vz(\tzI^\prime)| \leq b_4\norm{\tzI - \tzI^\prime}(\norm{\tzI} + \norm{\tzI^\prime}),
		\end{align}
	\end{subequations}
	for all $\state \!\in \! \cSc$, $\tzI, \tzI^\prime \! \in \! \R^{\npI}$, and some $b_{{\scriptscriptstyle I,1}}$, $b_{{\scriptscriptstyle I,2}}$, $b_{{\scriptscriptstyle I,3}}$, $b_{{\scriptscriptstyle I,4}} \! > \! 0$.
	Analogously, there exists $\VzE: \R^{\npE}\! \to\! \R$ such that%
	\begin{subequations}\label{eq:VzE}
		\begin{align}
			&b_{{\scriptscriptstyle E,1}}\norm{\tzE}^2 \leq \VzE(\tzE) \leq b_{{\scriptscriptstyle E,2}}\norm{\tzE}^2\label{eq:VzE_quadratic_bound}
			\\
			&\VzE(\pttrE(\state,0,\tzE)) \! - \! \VzE(\tzE) \leq - b_{{\scriptscriptstyle E,3}}\norm{\tzE}^2 \label{eq:VzE_minus}
			\\
			&|\VzE(\tzE) \! - \! \VzE(\tzE^\prime)| \! \leq\! b_4\norm{\tzE \! - \! \tzE^\prime}(\norm{\tzE} \! + \! \norm{\tzE^\prime}),\label{eq:VzE_lipp}
		\end{align}
	\end{subequations}
	for all $\state \in \R^{\nstate}$, $\tzE, \tzE^\prime \in \R^{\npI}$, and some $b_{{\scriptscriptstyle E,1}}$, $b_{{\scriptscriptstyle E,2}}$, $b_{{\scriptscriptstyle E,3}}$, $b_{{\scriptscriptstyle E,4}} > 0$,
	We highlight 
	that the inequality~\eqref{eq:VzE_minus} is written by considering $\tzI = 0$ into~\eqref{eq:E_error_coordinates}.
	Now, let us focus on the Lyapunov function $\Vz$ (cf.~\eqref{eq:Vz_VzI_VzE}), where $\kappa$ will be suitably fixed in the next.
	By evaluating $\Delta\Vz(\tzIt,\tzEt):=\Vz(\tzItp,\tzEtp)-\Vz(\tzIt,\tzEt)$ along the trajectories of system~\eqref{eq:I_E_error_coordinates}, we get 
	\begin{align*}
		\Delta \Vz(\tzIt,\tzEt) 
		&= \VzI(\pttrI(\state,\tzIt)) - \VzI(\tzIt) 
		+\kappa\VzE(\pttrE(\state,\tzIt,\tzEt))
		- \kappa\VzE(\tzEt).
	\end{align*}
	We note that the first two terms of the right-hand side can be bounded by resorting to~\eqref{eq:VzI_minus}, thus obtaining 
	\begin{align}
		\Delta \Vz(\tzIt,\tzEt) 
		&\leq -b_{{\scriptscriptstyle I,2}}\norm{\tzIt}^2
		+\kappa\VzE(\pttrE(\state,\tzIt,\tzEt))
		\! - \! \kappa\VzE(\tzEt).\label{eq:Delta_Vz_I_E}
	\end{align}
	As for the second line, the bound~\eqref{eq:VzE_minus} cannot be used since it requires $\pttrE$ evaluated at $\tzIt \!=\! 0$.
	Thus, we add and subtract $\kappa\VzE(\pttrE(\state,0,\tzEt))$ to the right-hand side of~\eqref{eq:Delta_Vz_I_E} and obtain 
	\begin{align}
		\Delta \Vz(\tzIt,\tzEt) 
		&\leq -b_{{\scriptscriptstyle I,2}}\norm{\tzIt}^2
		+\kappa\VzE(\pttrE(\state,0,\tzEt))
		-\kappa\VzE(\tzEt)
		+\kappa\VzE(\pttrE(\state,\tzIt,\tzEt))
		- \kappa\VzE(\pttrE(\state,0,\tzEt))
		\notag\\
		&\stackrel{(a)}{\leq} 
		-b_{{\scriptscriptstyle I,2}}\norm{\tzIt}^2
		-\kappa b_{{\scriptscriptstyle E,2}}\norm{\tzEt}^2
		+\kappa\VzE(\pttrE(\state,\tzIt,\tzEt))
		- \kappa\VzE(\pttrE(\state,0,\tzEt))
		\notag
		\\
		&\stackrel{(b)}{\leq} 
		-b_{{\scriptscriptstyle I,2}}\norm{\tzIt}^2
		-\kappa b_{{\scriptscriptstyle E,2}}\norm{\tzEt}^2
		\notag\\
		&\hspace{.4cm}
		+\kappa b_{{\scriptscriptstyle E,4}}\big\|\pttrE(\state,\tzIt,\tzEt) - \pttrE(\state,0,\tzEt)\big\|\big(\big\|\pttrE(\state,\tzIt,\tzEt)\big\| + \big\|\pttrE(\state,0,\tzEt)\big\|\big)
		\notag\\
		&\stackrel{(c)}{\leq} 
		 -  b_{{\scriptscriptstyle I,2}}\norm{\tzIt}^2
		-\kappa b_{{\scriptscriptstyle E,2}}\norm{\tzEt}^2
		 +  \kappa b_{{\scriptscriptstyle E,4}}\lipp_{{\scriptscriptstyle \pttrE}} \norm{\tzIt}\|\pttrE(\state,\tzIt,\tzEt)\| 
		 \notag\\
		 &\hspace{.4cm}
		  +  \kappa b_{{\scriptscriptstyle E,4}}\lipp_{{\scriptscriptstyle \pttrE}} \norm{\tzIt}\|\pttrE(\state,0,\tzEt)\|,\label{eq:Delta_Vz_I_E_2}
	\end{align}
	where in $(a)$ we apply~\eqref{eq:VzE_minus}, while in $(b)$ we use~\eqref{eq:VzE_lipp}, in $(c)$ we use the Lipschitz continuity of $\pttrE$ (cf.~\eqref{eq:lipp_ttrE}).
	Now, we note that, in light of~\eqref{eq:E_equilibrium}, it holds $\pttrE(\state,0,0) \!=\! 0$ for all $\state \!\in\! \R^{\nstate}$. 
	Thus, we add $\pm\pttrE(\state,0,0)$ into both the terms $\|\pttrE(\state,\tzIt,\tzEt)\|$ and $\|\pttrE(\state,\tzIt,\tzEt)\|$ of~\eqref{eq:Delta_Vz_I_E_2} and, by using the Lipschitz continuity of $\pttrE$ (cf.~\eqref{eq:lipp_ttrE}), we get 
	\begin{align}
		&\Delta \Vz(\tzIt,\tzEt) 
		\leq -\begin{bmatrix}
			\|\tzIt\|
			\\
			\|\tzEt\|
		\end{bmatrix}\T \tilde{Q}(\kappa)\begin{bmatrix}
			\|\tzIt\|
			\\
			\|\tzEt\|
			\end{bmatrix},
		\label{eq:Delta_Vz_I_E_3}
	\end{align}
	where we introduced $\tilde{Q}(\kappa) = \tilde{Q}(\kappa)\T \in \R^{2 \times 2}$ defined as 
	\begin{align*}
		\tilde{Q}(\kappa) := \begin{bmatrix}
			b_{{\scriptscriptstyle I,2}} - \kappa b_{{\scriptscriptstyle E,4}}\lipp_{{\scriptscriptstyle \pttrE}}^2& -\kappa b_{{\scriptscriptstyle E,4}}\lipp_{{\scriptscriptstyle \pttrE}}^2
			\\
			-\kappa b_{{\scriptscriptstyle E,4}}\lipp_{{\scriptscriptstyle \pttrE}}^2& \kappa b_{{\scriptscriptstyle E,2}}
		\end{bmatrix}.
	\end{align*}
	We know that $\tilde{Q}(\kappa) = \tilde{Q}(\kappa)\T > 0$ if and only if 
	\begin{align*}
		\begin{cases}
			b_{{\scriptscriptstyle I,2}} > \kappa b_{{\scriptscriptstyle E,4}}\lipp_{{\scriptscriptstyle \pttrE}}^2 
			\\
			\kappa b_{{\scriptscriptstyle I,2}}b_{{\scriptscriptstyle E,2}}  > \kappa^2  \left(b_{{\scriptscriptstyle E,4}}^2\lipp_{{\scriptscriptstyle \pttrE}}^4 + b_{{\scriptscriptstyle E,2}}b_{{\scriptscriptstyle E,4}}\lipp_{{\scriptscriptstyle \pttrE}}^2\right).
		\end{cases}
	\end{align*}
	Then, by setting 
		$\kappa < \min\left\{\tfrac{b_{{\scriptscriptstyle I,2}}}{b_{{\scriptscriptstyle E,4}}\lipp_{{\scriptscriptstyle \pttrE}}^2},\tfrac{b_{{\scriptscriptstyle I,2}}b_{{\scriptscriptstyle E,2}}}{b_{{\scriptscriptstyle E,4}}^2\lipp_{{\scriptscriptstyle \pttrE}}^4 + b_{{\scriptscriptstyle E,2}}b_{{\scriptscriptstyle E,4}}\lipp_{{\scriptscriptstyle \pttrE}}^2}\right\}$
	and denoting with $\tilde{q} > 0$ the smallest eigenvalue of $\tilde{Q}(\kappa)$, we bound the right-hand side of~\eqref{eq:Delta_Vz_I_E_3} as 
	\begin{align*}
		\Delta \Vz(\tzItp,\tzEtp) \leq - \tilde{q}(\|\tzIt\|^2 + \|\tzEt\|^2),
	\end{align*}
	which
	concludes the proof.
\end{proof}

\end{document}